\numberwithin{equation}{section}
\newtheorem{theorem}{Theorem}
\newtheorem{lemma}{Lemma}
\newtheorem{proposition}[theorem]{Proposition}
\newtheorem{remark}[theorem]{Remark}
\newcommand{\cC}{{\ensuremath{\mathcal C}} }
\newcommand{\cD}{{\ensuremath{\mathcal D}} }
\newcommand{\cF}{{\ensuremath{\mathcal F}} }
\newcommand{\cJ}{{\ensuremath{\mathcal J}} }
\newcommand{\cM}{{\ensuremath{\mathcal M}} }
\newcommand{\cN}{{\ensuremath{\mathcal N}} }
\newcommand{\cV}{{\ensuremath{\mathcal V}} }
\newcommand{\cW}{{\ensuremath{\mathcal W}} }
\newcommand{\cZ}{{\ensuremath{\mathcal Z}} }
\newcommand{\gd}{\delta}
\newcommand{\gep}{\varepsilon}
\renewcommand{\tilde}{\widetilde}          % wider `tilde'
\DeclareMathSymbol{\leqslant}{\mathalpha}{AMSa}{"36} % nicer `smaller or equal'
\DeclareMathSymbol{\geqslant}{\mathalpha}{AMSa}{"3E} % nicer `larger or equal'
\DeclareMathSymbol{\eset}{\mathalpha}{AMSb}{"3F}     % nicer `emptyset'
\newcommand{\dd}{\text{\rm d}}             % a straight d for differentials
\newcommand{\ed}{\text{\rm e}}    		   % a straight e for exponentials
\newcommand{\R}{\mathbb{R}}
\newcommand{\C}{\mathbb{C}}
\newcommand{\Z}{\mathbb{Z}}
\newcommand{\N}{\mathbb{N}}
\newcommand\bP{\ensuremath{\mathrm{P}}}
\newcommand\bE{\ensuremath{\mathrm{E}}}
\renewcommand{\epsilon}{\varepsilon}
\newcommand{\8}{\infty}
\newenvironment{myenumerate}{
\renewcommand{\theenumi}{\arabic{enumi}}
\renewcommand{\labelenumi}{{\rm(\theenumi)}}
\begin{list}{\labelenumi}
{
\setlength{\itemsep}{0.4em}
\setlength{\topsep}{0.5em}
\setlength\leftmargin{2.45em}
\setlength\labelwidth{2.05em}
\setlength{\labelsep}{0.4em}
\usecounter{enumi}
}
}
{\end{list}
}
\renewenvironment{enumerate}{
\begin{myenumerate}}
{\end{myenumerate}}
\newcommand{\beq}{\begin{equation}}
\newcommand{\eeq}{\end{equation}}
\newcommand{\ba}{\begin{aligned}}
\newcommand{\ea}{\end{aligned}}
\begin{document}

%%%%%%%%%%%%%%%  Title, author, affiliation, date, acknowledgements,  keywords %%%%%
\title{Evolution of a passive particle in a one-dimensional diffusive environment}

\author[François Huveneers]{François Huveneers}
\address[François Huveneers]{Ceremade, Universit\'e Paris-Dauphine, PSL, France}
\email{huveneers@ceremade.dauphine.fr}

\author[François Simenhaus]{Fran\c cois Simenhaus}
\address[François Simenhaus]{Ceremade, Universit\'e Paris-Dauphine, France}
\email{simenhaus@ceremade.dauphine.fr}

\date{\today}

\thanks{\emph{Acknowledgements: }We thank M.~Jara for useful discussions on the process $X$ evolving in a rough environment discussed in the Introduction. 
F.~H.\@ and F.~S.\@ are supported by the ANR-15-CE40-0020-01 grant LSD.
F.~S.\@ is supported by the ANR/FNS-16-CE93-0003 grant MALIN}

%\keywords{Random walk in dynamical environment}
%\subjclass[...]{...}

\maketitle

\begin{abstract} 
We study the behavior of a tracer particle driven by a one-dimensional fluctuating potential, 
defined initially as a Brownian motion, and evolving in time according to the heat equation.  
We obtain two main results. 
First, in the short time limit, we show that the fluctuations of the particle {become} Gaussian and sub-diffusive, with dynamical exponent $3/4$. 
Second, in the long time limit, we show that the particle is trapped by the local minima of the potential and evolves diffusively i.e. with exponent $1/2$. 
\end{abstract}

%%%%%%%%%%%%%%%%%%%%%%%%%%%%%%%%%%%%%%%%%%%%%%%%%%
%%%%%%%%%%%%%%%%%%%%%%%%%%%%%%%%%%%%%%%%%%%%%%%%%%
%%%%%%%%%%%%%%%%%%%%%%%%%%%%%%%%%%%%%%%%%%%%%%%%%%
%%%%%%%%%%%%%%%%%%%%%%%%%%%%%%%%%%%%%%%%%%%%%%%%%%

\section{Introduction}
Random walks in dynamical random environments have attracted a lot of attention recently. 
When the time correlations of the environment decay fast, several homogenization results have been obtained, 
see \cite{redig}\cite{blondel_hilario_teixeira} as well as references therein.
These results establish the existence of an asymptotic velocity for the walker (law of large numbers) 
and normal fluctuations around the average displacement (invariance principle). 
In the opposite extreme regime, when the environment becomes static, a detailed understanding of the behavior of the walker is available in dimension $d=1$, 
see e.g.~\cite{zeitouni} for a review. 

Diffusive environments in dimension $d=1$ constitute an intermediate case where memory effects are expected to become relevant, 
since correlations decay only with time as $t^{-1/2}$. 
Homogenization results are known when the walker drifts away ballistically, and escapes from the correlations of the environment
\cite{hilario_den_hollander}\cite{blondel_hilario_dos_santos}\cite{huveneers_simenhaus}\cite{salvi_simenhaus}. 
Recently, among other results, a law of large number with zero limiting speed was derived in \cite{hilario_kious},
for the position of a walker evolving on top of the symmetric simple exclusion process (SSEP). 
However, the question of the size of the fluctuations in such a case remains largely elusive,   
and several conflicting conjectures appear in the literature 
\cite{bohr_pikovsky}\cite{gopalakrishnan}\cite{nagar}\cite{avena_thomann}\cite{huveneers}. 
In a particular scaling limit, Gaussian anomalous fluctuations were shown to hold for a walker on the SSEP~\cite{jara_menezes}, see also below.

The aim of this paper is to advance our understanding on the fluctuations of a walker in an unbiased, one-dimensional, diffusive random environment, 
and to make clear that different behaviors may be observed depending on the time scales that we look at. 
For this, we introduce a new model where the evolution of the walker can be described in a fair bit of detail.  
To motivate properly the introduction of this new model, let us start with a question that, we believe, is of more fundamental interest.

\bigskip
\textbf{Motivation ---}
Let us consider a random potential $V = (V(t,x))_{t\ge 0, x\in\R}$ fluctuating with time according to the {stochastic heat equation}: 
\begin{equation}\label{eq: edwards wilkinson}
\begin{cases}
	V(0,x) \;  = \; B (x), \\
	\partial_t V (t,x) \;  = \; \partial_{xx} V(t,x) + \sqrt 2 \xi (t,x) 
\end{cases}
\end{equation}
where $B$ is a Brownian motion on $\R$, and where $\xi(t,x)$ is a space-time white noise. {We refer to \cite{hairer} for a gentle introduction to the stochastic heat equation.} 
The process $V$ is stationary, i.e.~$V(t,\cdot)$ is distributed as $B(\cdot)$ at all times $t\ge0$, 
and evolves diffusively in time, i.e.\@ the landscape described by $V(t,\cdot)$ in a box of size $L$ is refreshed after a time of order $L^2$. 
The potential $V$ solving~\eqref{eq: edwards wilkinson} can be obtained as the scaling limit of {the height function} of diffusive particle processes on the lattice, 
such as the SSEP, see e.g.~{Chapter 11 in} \cite{kipnis_landim}. 

We would like to consider a process $X = (X_t)_{t\ge 0}$ describing a particle driven by this potential in the overdamped regime: 
\begin{equation}\label{eq: X in rough field}
\begin{cases}
	X_0  \; = \;  0, \\
	\partial_t X_t  \; = \;   - \partial_x V (t,X_t) \; =: \; u (t,X_t)
\end{cases}	
\end{equation}
Since $V(t,\cdot)$ is rough, it is however not clear that the evolution equation~\eqref{eq: X in rough field} makes sense, 
and three natural questions arise: 
\begin{enumerate}
	\item Can the process $X$ be properly defined? 
	\item If yes, how does it behave on short time scales? 
	\item And how does it behave in the long time limit? 
\end{enumerate}
{We notice that, in the presence of an external random force, the analogous process in a static environment is well defined: 
There exists a process $X^{\text{st}}$ solving 
$$
	X^{\text{st}}_0 \; = \; 0, 
	\qquad 
	\dd X^{\text{st}}_t \; = \; - \partial_x V(0,X^{\text{st}}_t)\dd t + \dd \tilde B_t
$$ 
almost surely for all $t\ge 0$, where $(\tilde B_t)_{t\ge 0}$ is a Brownian motion independent of $B$ \cite{brox}\cite{hu shi}. 
Moreover, the long time behavior of $X^{\text{st}}$ is analogous to that of Sinai's random walk \cite{sinai}.}

{As far as we know}, the above questions were first addressed in \cite{bohr_pikovsky} by means of a heuristic fixed point argument. 
First, the authors conclude that if a process $X$ solves \eqref{eq: X in rough field}, it fluctuates sub-diffusively on short time scales: 
$X_{t+\Delta t} - X_t$ is typically of order $(\Delta t)^{3/4}$ for small $\Delta t$.
Second, the fluctuations of $X$ become (almost) diffusive on long time scales: $X_t$ is of order $(t \ln t)^{1/2}$ as $t$ grows large. 

The validity of these claims was analyzed in \cite{huveneers}, by means of numerical simulations and theoretical arguments 
{but, to the best of our knowledge, no rigorous proof has been provided so far.} 
The conclusion of \cite{huveneers} confirms the findings of \cite{bohr_pikovsky}, 
though the existence of a logarithmic correction in the long time behavior could not be ascertain.
Moreover, the analysis in \cite{huveneers} allows to view the process $X$ as the limit of well defined processes. 
We defer to Appendix~\ref{appendix} the few steps needed to recast the analysis developed in \cite{huveneers} into the present framework.

The occurence of two distinct behaviors, on short and long time scales, can be attributed to the {two} following mechanisms.
On short time scales, 
if the velocity field $u = - \partial_x V$ evolves much faster than the particle $X$, we can use the approximation
\begin{equation}\label{eq: short time approximation}
	X_{t+\Delta t} \; \simeq \; X_t + \int_0^{\Delta t} u (t+s,X_t) \dd s,
\end{equation}
that we expect to become exact in the limit $\Delta t \to 0$. 
Assuming moreover that the fast fluctuations of $u$ in the time interval $[t,t+\Delta t]$ are uncorrelated from $X_t$,
{we may further expect that the increments become stationary in the limit $\Delta t \to 0$, 
and we approximate $\int_0^{\Delta t} u (t+s,X_t) \dd s$ by $\int_0^{\Delta t} u (s,0) \dd s$. 
Therefore, since $u$ is Gaussian and $\bE(u(s,0) u(s',0)) = (4\pi|s-s'|)^{-1/2}$ for all $s,s'\ge 0$, we arrive at}
\begin{equation}\label{eq: sub diffusive delta t}
	\frac{X_{t+\Delta t} - X_t}{(\Delta t)^{3/4}} \quad \to \quad \cN(0,D)
\end{equation}
in law as $\Delta t \to 0$, with $D = 4/3\sqrt \pi$, and this result is consistent with the assumption that $u$ evolves much faster than $X$. 

{
However, we do not expect \eqref{eq: sub diffusive delta t} to be valid in the large time limit $\Delta t\to +\infty$, 
because $V$ imposes potential barriers that will trap the particle. 
Indeed, on the long run, we believe that the particle will move to the deepest local minimum of the potential that becomes reachable thanks to fluctuations, 
as it is the case for Sinai's walk. 
Since the potential barriers evolve diffusively, we expect the evolution of X to become eventually diffusive itself.}

In this paper, we introduce a simpler model, where these two mechanisms can be clearly exhibited, and the intuitive reasonings above made rigorous.
In particular, we will make clear how sub-diffusive behavior on {an initial} short time scale, and diffusive behavior on long time scales, can co-exist.
The simplification that we introduce is to remove all sources of fluctuations in the time evolution of the potential $V$: 
We still still consider a process $X$ solving \eqref{eq: X in rough field} with $V$ being now given by 
$$
	\begin{cases}
	V(0,x)\; = \; B(x),  \\
	\partial_t V(t,x) \; = \; \partial_{xx} V (t,x).
	\end{cases}
$$
Doing so, the potential $V$ becomes more and more regular as times evolves, and the sub-diffusive behavior \eqref{eq: sub diffusive delta t} only persists at $t=0$. 
On the other hand, trapping effects become more pronounced {as the time grows large}, leading to the eventual diffusive behavior of $X$. 
We also notice that the environment will no longer be stationary, 
and that our set-up bares some similarities with the random walks in cooling environment introduced recently in a series of papers \cite{avena_den_hollander,avena_chino,avena_chino_2}.

Finally, let us mention that the recent mathematical result in \cite{jara_menezes} provides a partial and indirect support to the conjecture~\eqref{eq: sub diffusive delta t}.
Indeed, the authors of \cite{jara_menezes} study a random walk $W^n = (W_{t}^n)_{0 \le t \le T}$, 
jumping on $\Z$ {at a rate proportional to $n$}, on top of the SSEP with a diffusion constant proportional to $n^2$. 
In the limit $n\to \infty$ and in the absence of drift, 
they derive that $W_{t}^n/\sqrt{n}$ converges to a sum of two Gaussian process,
with standard deviation at time $t$ proportional to $t^{1/2}$ and $t^{3/4}$ respectively. 
{As we explain in Appendix~\ref{appendix}, once properly rescaled, 
the processes $W^n$ converge to the putative process $X$ solving \eqref{eq: X in rough field}, but only on a time domain that shrinks to $0$ as $n\to\infty$.}

\bigskip
\textbf{Organization of this paper ---}
In Section~\ref{sec: definitions and results}, we define properly the model studied in this paper, and we state our two main results. 
The first one, Theorem~\ref{the: short time}, deals with the short time behavior of the passive particle, and is shown in Section~\ref{sec: short time}. 
The second one, Theorem~\ref{the: long time}, deals with its long time behavior, and is shown in Section~\ref{sec: long time}. 
Some informations on the behavior of the environment are collected in Section~\ref{sec: description of the environment}, 
and some intermediate results on the behavior of the zeros of the velocity field are gathered in Section~\ref{sec: zeros}. 

%%%%%%%%%%%%%%%%%%%%%%%%%%%%%%%%%%%%%%%%%%%%%%%%%%
%%%%%%%%%%%%%%%%%%%%%%%%%%%%%%%%%%%%%%%%%%%%%%%%%%
%%%%%%%%%%%%%%%%%%%%%%%%%%%%%%%%%%%%%%%%%%%%%%%%%%
%%%%%%%%%%%%%%%%%%%%%%%%%%%%%%%%%%%%%%%%%%%%%%%%%%

\section{Definitions and results}\label{sec: definitions and results}

We consider a one dimensional brownian motion $B = (B(x))_{x\in \R}$ and we define the random potential $V = (V(t,x))_{t\ge 0,x \in \R}$ by 
\begin{equation}\label{eq: u field}
	\begin{cases}
	V(0,x)\; = \; B(x) \quad \text{for all } x\in \R  \\
	\partial_t V(t,x) \; = \; \partial_{xx} V (t,x) \quad \text{for} \quad t>0,\;  x\in \R.
	\end{cases}
\end{equation}
Almost surely, the potential $V$ is well defined and analytic as a function of $t> 0$ and $x\in \R$. 
Indeed, let $ D  = \{t\in \C : \Re (t)>0\} \times \C$ and let us define the heat kernel as the complex function on $D$ such that
\begin{equation}\label{eq: gaussian kernel}
	(t,x) \; \mapsto \;  P_t(x) \; = \; \frac{\ed^{- \frac{x^2}{  4t}}}{\sqrt{ 4\pi t}} .
\end{equation}
The heat kernel is analytic as a function of the variables $t$ and $x$, for $(t,x)\in D$.
Moreover, almost surely, there exists $C> 0$ such that $|B(x)| \le C (|x|+1)$, see e.g.~\cite{evans}.
Therefore, we can define a function $V$ on $D$ by
$$
	V(t,x) \; = \; \int_\R P_t(x-y) B(y) \dd y , 
$$
it is analytic as a function of the variables $t$ and $x$, for $(t,x)\in D$, and it solves \eqref{eq: u field} for $t>0$ and $x\in \R$. 

Let $u = - \partial_x V$ be a velocity field. For all $t> 0$ and $x \in \R$, the representation 
\begin{equation}\label{eq: u representation}
	u(t,x) \; =  \; - \int_\R \partial_x P_t (x-y) B(y) \dd y \; = \; -\int_\R  P_t (x-y)  \dd B(y)
\end{equation}
holds.
We now introduce the process $X=(X_t)_{t\ge 0}$ that will be our main object of study, see also Fig.~\ref{fig: X and V}.
\begin{proposition}\label{pro: X well defined}
There exists a process $X = (X_t)_{t \ge 0}$ satisfying almost surely
\begin{equation}\label{eq:X}
	\begin{cases}
	X_0\; = \; 0 \\
	\partial_t X_t \; = \;  u (t,X_t) \quad \text{for} \quad t>0, 
	\end{cases}
\end{equation}
continuous on $\R_+$ and smooth on $\R_+^*$.
\end{proposition}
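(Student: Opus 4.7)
The plan is to first solve the ODE on each interval of the form $[t_0, \infty)$ with $t_0 > 0$, where $u$ enjoys good regularity, and then send $t_0 \to 0^+$ to extend the solution up to the boundary $t=0$.

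\emph{Step 1 (solving away from $t = 0$).} By the analyticity of $V$ stated after \eqref{eq: u field}, the velocity field $u = -\partial_x V$ is analytic on $(0,\infty) \times \R$. On any rectangle $[t_0, T] \times [-R, R]$ with $t_0 > 0$, $u$ is therefore $C^1$ and Lipschitz in $x$ uniformly in $t$, so the Cauchy--Lipschitz theorem provides, for each $x_0 \in \R$, a unique maximal smooth solution of $\partial_t Y_t = u(t, Y_t)$ starting from $Y_{t_0} = x_0$. To rule out finite-time explosion I would exploit the fact that, by \eqref{eq: u representation}, $u(t, \cdot)$ is for each $t > 0$ a centered stationary Gaussian field with covariance $\bE[u(t,x) u(t,x')] = (8\pi t)^{-1/2} \ed^{-(x-x')^2/(8t)}$. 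Standard Gaussian supremum estimates (Borell--TIS or a chaining argument) then give $\sup_{|x| \le R}|u(t,x)| = O(\sqrt{\log R})$ a.s., which is sublinear in $x$, so Grönwall's lemma precludes explosion in finite time. For every $t_0 > 0$ this produces a unique smooth global solution $X^{t_0}$ on $[t_0, \infty)$ with $X^{t_0}_{t_0} = 0$.

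\emph{Step 2 (extension to $t = 0$).} I would show that the family $\{X^{t_0}\}_{t_0 > 0}$ is Cauchy as $t_0 \to 0^+$, uniformly on some small random interval $[0, t^*(\omega)]$. The crucial input is the almost sure bound
\[
	|u(s, x)| \; \le \; C(\omega) \, s^{-1/4} \qquad \text{for all } (s, x) \in (0, 1] \times [-1, 1].
\]
This follows from the fact that $s^{1/4} u(s, \cdot)$ has variance bounded uniformly in $s \in (0,1]$, combined with a chaining argument applied to the two-parameter Gaussian field $(s,x) \mapsto s^{1/4} u(s,x)$ on $(0,1] \times [-1,1]$; the necessary Hölder continuity in both $s$ and $x$ can be read off from the explicit heat kernel \eqref{eq: gaussian kernel} and the representation \eqref{eq: u representation}. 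Granted this bound, as long as $|X^{t_0}_s| \le 1$ on $[t_0, t]$ one obtains $|X^{t_0}_t| \le C(\omega) \int_{t_0}^{t} s^{-1/4} \dd s \le \tfrac{4}{3} C(\omega) t^{3/4}$, which is self-consistent on a deterministic-in-$\omega$ interval $[0, t^*(\omega)]$. Applying Grönwall to $X^{t_0} - X^{t_0'}$ on $[\max(t_0,t_0'), t^*]$, whose initial datum is of size $O(t_0^{3/4})$, then yields Cauchy-ness and hence a limit $X$. This limit satisfies \eqref{eq:X} classically on $\R_+^*$ (and is smooth there by bootstrapping from the analyticity of $u$), and the same integral bound gives $|X_t| \le \tfrac{4}{3}C(\omega) t^{3/4}$, giving continuity at $0$ with $X_0 = 0$.

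\emph{Main obstacle.} The only genuine difficulty is the almost sure bound on $u(s,x)$ uniform on $(0,1] \times [-1,1]$, i.e.\@ controlling the integrable divergence of order $s^{-1/4}$ as $s \to 0$: once this Gaussian supremum estimate is in hand, all remaining pieces---Cauchy--Lipschitz, sublinear-in-$x$ non-explosion, the Grönwall comparison, smoothness by bootstrapping---reduce to standard ODE theory.
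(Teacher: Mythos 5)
Your overall scheme is viable and in fact close in spirit to the paper's proof, which runs a Banach fixed-point argument for $f\mapsto \int_0^t u(s,f_s)\,\dd s$ directly on a small random interval $[0,\gep]$ and then extends by a barrier (sign) argument for non-explosion; your $O(\sqrt{\log R})$ growth bound is a reasonable alternative for that last point. However, your declared ``crucial input'' is false as stated: almost surely $\sup\{s^{1/4}|u(s,x)| : s\in]0,1],\, x\in[-1,1]\} = +\infty$. The paper makes exactly this point in the remark following Lemma~\ref{lem: upper bounds on u and del u}: for fixed $s$ the variables $s^{1/4}u(s,x)$ are identically distributed nondegenerate Gaussians, and at distinct spatial points they become asymptotically independent as $s\to 0$, so the supremum over $x\in[-1,1]$ of the standardized field diverges (like $\sqrt{\log(1/s)}$) as $s\to 0$; correspondingly, the chaining/entropy integral you invoke does not converge for the field $(s,x)\mapsto s^{1/4}u(s,x)$ on $]0,1]\times[-1,1]$. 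The gap is repairable: the correct statement (the paper's \eqref{eq: bound on sup u}) is $\sup_{x\in[-1,1]}|u(s,x)|\le C(\omega)\, s^{-1/4-\varepsilon}$ for any fixed $\varepsilon>0$, which is still integrable at $s=0$ and yields $|X_t|\le C'(\omega)\,t^{3/4-\varepsilon}$, enough for your Step 2 and for continuity at $0$.

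A second point you pass over: the Grönwall comparison of $X^{t_0}$ and $X^{t_0'}$ requires a Lipschitz bound on $u(s,\cdot)$ on $[-1,1]$ whose time integral stays bounded as the lower endpoint tends to $0$, i.e. the companion estimate $\sup_{x\in[-1,1]}|\partial_x u(s,x)|\le C(\omega)\,s^{-3/4-\varepsilon}$ (the paper's \eqref{eq: bound on sup del x u}); integrability of this quantity near $s=0$ is what keeps the Grönwall factor bounded uniformly in $t_0$, and it is not ``standard ODE theory'' but a second Gaussian supremum estimate of the same kind as the first (this is exactly why the paper's Lemma~\ref{lem: upper bounds on u and del u} contains several bounds, all proved via Dudley's theorem). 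With these two corrected estimates in hand, your limiting construction as $t_0\to 0$ goes through and is essentially equivalent to the paper's contraction argument on $[0,\gep]$.
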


\begin{figure}[h]
    \centering
    \includegraphics[draft=false,height = 8cm,width = 12cm]{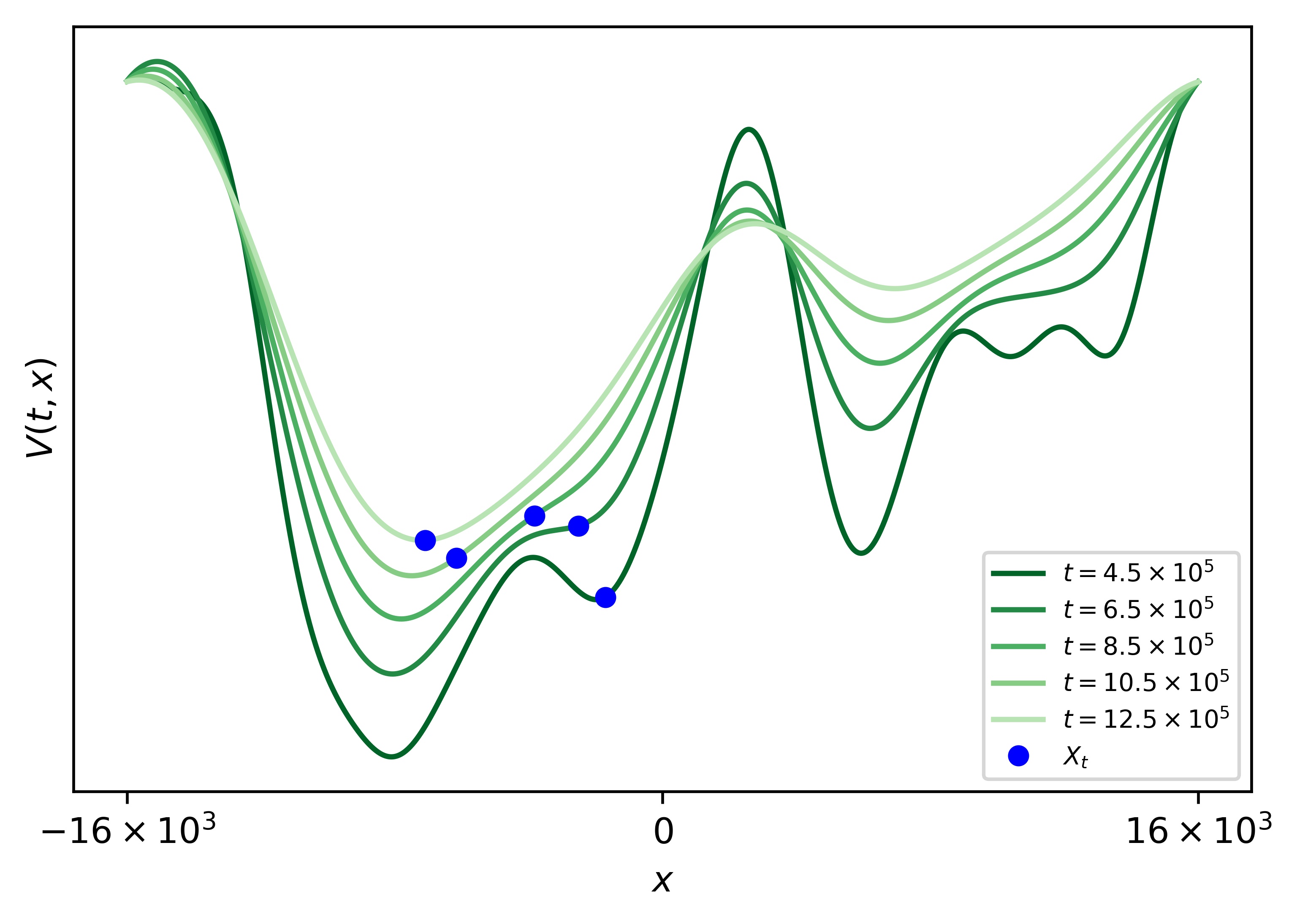}
    \caption{
    The process $X$ and the potential $V$ at different times. 
    In the long run, the particle $X$ sticks most of the time to a local minimum of the potential $V$, 
    as made precise in Theorem~\ref{the: long time}. 
    For the numerical simulation used to generate this plot, 
    we have assumed periodic boundary conditions and we have taken the initial condition $X_{t_0} = 0$ with $t_0=4\times 10^5$.}
    \label{fig: X and V}
\end{figure}

We want to show two results on the behavior of $X$.
The first one characterizes its short time behavior:  

\begin{theorem}\label{the: short time}
When the space of continuous function from $\R^+$ to $\R$ is endowed with the topology of uniform convergence on compact sets, the following convergence holds:
\begin{equation}\label{eq: short time}
	\left(\frac{X_{\theta T}}{T^{3/4}}\right)_{\theta\geq 0}
	\; \xrightarrow[T\to 0]{(law)} \; 
	\left(\int_0^\theta u (s,0) \dd s\right)_{\theta\ge 0}.
\end{equation}
\end{theorem}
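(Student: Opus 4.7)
The plan is to exploit the Brownian scale invariance of the initial potential to reduce the statement to a direct comparison between two ODEs driven by the same field. First I would observe that the rescaled potential $\tilde V(t,x) := T^{-1/4} V(Tt, T^{1/2}x)$ still solves the heat equation, with initial datum $T^{-1/4} B(T^{1/2}x)$, equal in law to $B(x)$ by Brownian scaling; hence $\tilde V$ has the same law as $V$ as a process, and the associated velocity field $\tilde u(t,x) := T^{1/4} u(Tt, T^{1/2}x)$ has the same law as $u$. Setting $Y^{(T)}_\theta := X_{T\theta}/T^{3/4}$, the chain rule gives
\begin{equation*}
\partial_\theta Y^{(T)}_\theta \;=\; T^{1/4} u(T\theta, T^{3/4} Y^{(T)}_\theta) \;=\; \tilde u(\theta, T^{1/4} Y^{(T)}_\theta), \qquad Y^{(T)}_0 = 0.
\end{equation*}
Modulo the coupling $\tilde u \leftrightarrow u$, I may therefore replace $Y^{(T)}$ by the process $\hat X^{(T)}$ solving $\partial_\theta \hat X^{(T)}_\theta = u(\theta, T^{1/4} \hat X^{(T)}_\theta)$ with $\hat X^{(T)}_0 = 0$, and reduce the theorem to showing that $\hat X^{(T)} \to Z$ uniformly on compacts in probability, where $Z_\theta := \int_0^\theta u(s,0)\,ds$. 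That $Z$ is well-defined follows from the It\^o isometry and the semigroup identity $\int P_s P_{s'} = P_{s+s'}(0)$, which yields $\mathrm{Cov}(u(s,0), u(s',0)) = (4\pi(s+s'))^{-1/2}$ and hence $\bE[Z_\theta^2] = c\,\theta^{3/2} < \infty$.

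The second step is the direct comparison. Subtracting the integral forms of the two ODEs I would write
\begin{equation*}
\hat X^{(T)}_\theta - Z_\theta \;=\; \int_0^\theta \bigl[u(s, T^{1/4} \hat X^{(T)}_s) - u(s, 0)\bigr]\, ds,
\end{equation*}
and bound the integrand via the mean value theorem in $x$. For fixed $s > 0$, $u(s, \cdot)$ is smooth with $\mathrm{Var}(\partial_x u(s,y)) = c\, s^{-3/2}$ uniformly in $y$, and a standard Gaussian maximal inequality (Dudley chaining, or Kolmogorov--Chentsov) gives $\bE[\sup_{|y|\le R} |\partial_x u(s,y)|^2] \le C_R\, s^{-3/2}$. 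The key point is that $s^{-3/4}$ is integrable at $0$, so the random variable $M_R := \int_0^K \sup_{|y|\le R} |\partial_x u(s,y)|\,ds$ has finite expectation. Introducing the stopping time $\tau_R := \inf\{\theta : T^{1/4}|\hat X^{(T)}_\theta| \ge R\}$, the mean value estimate on $[0, K \wedge \tau_R]$ yields
\begin{equation*}
\sup_{[0, K \wedge \tau_R]} |\hat X^{(T)} - Z| \;\le\; T^{1/4} M_R \bigl(\|Z\|_{\infty, [0, K]} + \sup_{[0, K \wedge \tau_R]} |\hat X^{(T)} - Z|\bigr),
\end{equation*}
which, for $T$ small enough that $T^{1/4} M_R < 1/2$, closes by absorption and gives $\sup_{[0, K \wedge \tau_R]} |\hat X^{(T)} - Z| \to 0$ in probability. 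A short bootstrap then shows $\tau_R > K$ with probability tending to one as $T \to 0$ and $R$ large.

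The main obstacle is the singularity of $u$ at $t = 0$, which is precisely what produces the non-trivial exponent $3/4$: both $u(s, \cdot)$ and $\partial_x u(s, \cdot)$ blow up as $s \to 0$, and integrability in $s$ near the origin has to be recovered sharply. The pointwise variance bounds of order $s^{-1/2}$ and $s^{-3/2}$ only become integrable after extracting square roots, so one must pass from pointwise Gaussian bounds to sup-in-$y$ bounds on parabolic strips while keeping the same power of $s$. A secondary, related issue is the self-consistent treatment of the cutoff $R$: the Gronwall-type absorption above requires $T^{1/4} M_R < 1/2$, and a crude a priori estimate of the form $\sup_{[0,K]} |\hat X^{(T)}| \le \int_0^K \sup_{|y|\le R} |u(s,y)|\,ds$ (again finite by the same chaining arguments applied to $u$ in place of $\partial_x u$) is what allows one to close the bootstrap uniformly for small $T$.
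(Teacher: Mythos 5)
Your proof is correct and follows essentially the same route as the paper: you use the scaling relation of $u$ to identify the limit and then compare the particle trajectory with $\int_0^\cdot u(s,0)\,\dd s$ via sup-in-space bounds on $u$ and $\partial_x u$ that are integrable near $t=0$, which is exactly the combination of the scaling property, Lemma~\ref{lem: upper bounds on u and del u} and the a priori bound $|X_t|\lesssim t^{3/4-\delta}$ used in the paper (your rescale-first coupling plus the stopping-time/absorption step is a repackaging of that estimate). The only inaccuracy is the claim $\bE[\sup_{|y|\le R}|\partial_x u(s,y)|^2]\le C_R\, s^{-3/2}$: the supremum over a fixed spatial window of a Gaussian field whose correlation length is of order $\sqrt s$ carries an extra $\log(1/s)$ factor (this is precisely why Lemma~\ref{lem: upper bounds on u and del u} uses the exponent $\tfrac34+\varepsilon$ and fails at $\varepsilon=0$), but this is harmless for your argument since $s^{-3/4}\sqrt{\log(1/s)}$ is still integrable at $0$, so $M_R$ is indeed almost surely finite.
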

  
We need to introduce some preliminary material to formulate our second result, dealing with the long time behavior of $X$.  
Some of the objects are illustrated on Fig.~\ref{fig: 0LRX}.
Given $t>0$, let us define the set of zeros of the field $u$ at time $t$: 
$$
	\mathcal Z_t \; = \; \{ x \in \R : u(t,x) = 0\}\, .
$$
Let us distinguish attractive, or stable zeros, from repulsive, or unstable ones: 
$$
	\begin{cases}
	\mathcal Z_t^{\mathrm s} \; = \;  \{ x \in \R : u(t,x) = 0, \partial_x u (t,x) < 0\}\, , \\
	\mathcal Z_t^{\mathrm u} \; = \;  \{ x \in \R : u(t,x) = 0, \partial_x u (t,x) > 0\} .
	\end{cases}
$$
We may also observe zeros that are nor stable nor unstable, say neutral: 
$$
	\mathcal Z_t^{\mathrm n} \; = \;  \{ x \in \R : u(t,x) = 0, \partial_x u (t,x) = 0\} .
$$ 

The next lemma allows to ``trace back'' a zero at time $t$ up to time $0$: 
\begin{lemma}\label{lem: definition r}
	Almost surely, for all $t>0$ and all $x \in \mathcal Z_t^{\mathrm s} \cup \mathcal Z_t^{\mathrm u}$, 
	there exists a unique continuous function $r_{(t,x)}:[0,t]\to \R$ such that for all $0<s\leq t$,
	$$
		u(s,r_{(t,x)}(s))=0
	$$
	and actually $r_{(t,x)} (s) \in \mathcal Z_s^{\mathrm s}$ if $x \in \mathcal Z_s^{\mathrm s}$ 
	and $r_{(t,x)} (s) \in \mathcal Z_s^{\mathrm u}$ if $x \in \mathcal Z_s^{\mathrm u}$
	(and thus in particular $\partial_x u(s, r_{(t,x)} (s)) \ne 0$).
	The function $r_{(t,x)}$ is smooth on $]0,t[$ and for all $0<s \leq t$,
	\beq\label{eq:deriv r}
		\partial_s r_{(t,x)}(s)
		\; = \; 
		- \frac{\partial_{s} u(s,r_{(t,x)}(s))}{\partial_xu(s,r_{(t,x)}(s))} 
		\; =\; 
		- \frac{\partial^2_{xx} u(s,r_{(t,x)}(s))}{\partial_xu(s,r_{(t,x)}(s))}.
	\eeq
\end{lemma}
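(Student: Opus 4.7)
The plan is to apply the implicit function theorem locally at $(t,x)$, extend the resulting smooth trace maximally backward in time, and then use the analytic structure of $u$ together with the heat equation $\partial_s u=\partial_{xx}u$ to show the extension reaches $s=0$.

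Since $V$ is analytic on $(0,\infty)\times\R$, so are $u=-\partial_xV$ and its derivatives. At $(t,x)$ with $u(t,x)=0$ and $\partial_xu(t,x)\neq 0$, the implicit function theorem provides a unique smooth function $r$ on a neighborhood of $t$ with $r(t)=x$ and $u(s,r(s))=0$. Implicit differentiation gives $\partial_s r=-\partial_su/\partial_xu$; since $V$, and hence $u=-\partial_xV$, solves the heat equation, $\partial_su=\partial_{xx}u$, yielding both equivalent formulas in~\eqref{eq:deriv r}. Gluing IFT patches, let $I=(s_-,t]$ be the maximal interval, with $s_-\in[0,t)$, on which $r$ extends smoothly with $u(s,r(s))=0$ and $\partial_xu(s,r(s))\neq 0$. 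Along $I$ the map $s\mapsto\partial_xu(s,r(s))$ is continuous and non-vanishing, so its sign is constant, proving preservation of stable/unstable type.

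The crux is showing $s_-=0$. If $s_->0$, maximality forces either $r(s)\to y_*\in\R$ with $\partial_xu(s_-,y_*)=0$ (the trace approaches a neutral zero) or $|r(s)|\to\infty$. The first case is excluded via the heat equation: at such a neutral zero, assuming the non-degenerate situation $\partial_{xx}u(s_-,y_*)\neq 0$, one has $\partial_su(s_-,y_*)=\partial_{xx}u(s_-,y_*)=a\neq 0$, and a second-order Taylor expansion gives $u(s,y)\approx a\bigl((s-s_-)+(y-y_*)^2/2\bigr)$; the local zero set of $u$ near $(s_-,y_*)$ is thus a parabola opening strictly into $\{s<s_-\}$, so no zeros of $u$ lie near $y_*$ for $s>s_-$, contradicting $r(s)\to y_*$. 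Fully degenerate neutral zeros (where also $\partial_{xx}u=0$) form a set that is negligible by analyticity of $u$ and the Gaussian non-degeneracy of $V$, and a transversality argument shows that almost surely no traced curve encounters one. The second case is excluded because on any compact $[s_-+\epsilon,t]\subset I$, both $\partial_xu\circ r$ and $\partial_{xx}u\circ r$ are continuous, so $\partial_s r$ is bounded; combined with uniform spatial bounds on $u$ and its derivatives derived from~\eqref{eq: u representation} and the almost-sure sublinear growth of $B$, this prevents $r$ from leaving bounded regions as $s\to s_-^+$.

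Once $s_-=0$ is established, continuity of $r$ at $s=0$ follows from the Sturm/Matano zero-counting principle: the number of sign changes of $u(s,\cdot)$ on any bounded window is non-increasing in $s$. If $r$ had two distinct subsequential limits as $s\to 0^+$, small-$s$ zeros of $u$ in the intervening interval would, going forward in time, have to annihilate pairwise by time $t$, which the local configuration near $x$ does not accommodate. Uniqueness of $r$ as a continuous function on $[0,t]$ is a connectedness argument: a continuous competitor $\tilde r$ agrees with $r$ at $t$, and the set $\{\tilde r=r\}\subset(0,t]$ is open by local IFT uniqueness at the non-degenerate zero $r(s)$ and closed by continuity, hence equals $(0,t]$; continuity then extends equality to $s=0$. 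The main obstacle is making all this uniform over the uncountable family $\{(t,x):t>0,x\in\mathcal Z_t^{\mathrm s}\cup\mathcal Z_t^{\mathrm u}\}$: both the almost-sure avoidance of fully degenerate neutral zeros by \emph{every} trace and the existence of $\lim_{s\to 0^+}r(s)$ near the singular initial time (where the Hölder-$1/2$ roughness of $B$ makes $\mathcal Z_s$ dense as $s\to 0^+$) require a delicate simultaneous control, rather than pointwise statements.
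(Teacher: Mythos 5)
Your skeleton is the same as the paper's: implicit function theorem at $(t,x)$, maximal backward extension, sign preservation along the trace, the derivative formula \eqref{eq:deriv r} from implicit differentiation plus the heat equation, and the parabola picture at a non-degenerate neutral zero (this is exactly the paper's observation \eqref{eq: no zero below}, which blocks a trace from reaching such a point from above). The connectedness argument for uniqueness is also fine. However, two steps are genuinely missing. First, your exclusion of the scenario $|r(s)|\to\infty$ does not work: the trace is a zero-level curve, not an integral curve of $u$, and its speed is $\partial_s r=-\partial_{xx}u/\partial_x u$, which sup-norm bounds on $u$ and its derivatives cannot control, since the denominator may degenerate precisely as $s\downarrow s_-$. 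The paper controls the trace by a completely different mechanism: the sign-grid of Proposition~\ref{prop: main step exp decay} and \eqref{eq: def G}--\eqref{eq:boite cle}, i.e.\ space-time boxes on dyadic time slabs containing points where $u>0$ and points where $u<0$, which the zero curve cannot cross; this yields the uniform bound $|r_{(t,x)}(s)|\le \tfrac{2K}{3}\alpha_k\sqrt{2^k}$ down to time $0$, and, as a bonus, the Cauchy estimate $|r(s)-r(s')|\le\sum_{n\ge j}2\ell_n(k)$ which gives continuity at $s=0$. Your substitute for that last point, a Sturm/Matano zero-counting argument, is not justified in this setting (white-noise-type initial data on all of $\R$, with $\cZ_s$ proliferating as $s\to0^+$), and you yourself flag at the end that the uniform control near $s=0$ is unresolved.

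Second, your treatment of fully degenerate neutral zeros ($u=\partial_xu=\partial_{xx}u=0$) is a gap, not a proof: you assert they are ``negligible'' and that ``a transversality argument'' shows no trace meets one, but proving trace-avoidance simultaneously for the uncountable family of traces is exactly the uniformity problem you acknowledge and do not solve. The paper avoids it entirely by proving the stronger field-level statement (item 1 of Lemma~\ref{lem: proba zero events}): almost surely there is \emph{no} point at all with $u=\partial_tu=\partial_xu=0$, established via a Gaussian density bound $\bP(|(u,\partial_tu,\partial_xu)(t,x)|<\varepsilon)\le C\varepsilon^3$ combined with a discretization and gradient-bound argument, plus scaling. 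With that in hand, every neutral zero satisfies $\partial_tu=\partial_{xx}u\ne0$ and your parabola argument applies everywhere, with no per-trace transversality needed. To complete your proof you would need to prove (or cite) that non-degeneracy statement and replace the escape-to-infinity and $s\to0$ arguments by a quantitative barrier construction of the above type.
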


Once properly rescaled, the long time behavior of $X$ is described by the limiting process $Z = (Z_t)_{t\ge 0}$ introduced in the following proposition: 

\begin{proposition}\label{pro: Z process}
	There exist unique processes $L = (L_t)_{t\ge 0}$ and $R = (R_t)_{t\ge 0}$ such that $L_0=R_0 = 0$ and, almost surely, for all $t>0$, 
	\begin{equation}\label{eq: def Lt Rt}
		\begin{cases}
		L_t \; = \; \max \{x \in \mathcal Z_t^{\mathrm s} \cup \mathcal Z_t^{\mathrm u} : r_{(t,x)}(0) < 0\},\\
		R_t \; = \; \min \{x \in \mathcal Z_t^{\mathrm s} \cup \mathcal Z_t^{\mathrm u} : r_{(t,x)}(0) > 0\}.
		\end{cases}
	\end{equation}
	Moreover, almost surely, for all $t>0$, one and only one of the following events occur
	\begin{equation}\label{eq: alternative stable unstable}
		\big( L_t \in \mathcal Z_t^{\mathrm s} \quad\text{and}\quad R_t \in \mathcal Z_t^{\mathrm u} \big) 
		\qquad \text{or} \qquad
		\big( L_t \in \mathcal Z_t^{\mathrm u} \quad\text{and}\quad R_t \in \mathcal Z_t^{\mathrm s} \big).
	\end{equation}
	We can thus define a process $Z = (Z_t)_{t\ge 0}$ by $Z_0 = 0$ and
	$$
		\begin{cases}
		Z_t \; = \; L_t \quad \text{if}\quad L_t \in  \mathcal Z_t^{\mathrm s}\\ 
		Z_t \; = \; R_t \quad \text{if}\quad R_t \in  \mathcal Z_t^{\mathrm s}
		\end{cases}
	$$
	for $t > 0$. The following properties of $Z$ hold: 
	
	1. Almost surely, $Z$ is càdlàg. 
	
	2. Almost surely, $Z$ is discontinuous at some time $t > 0$ if and only if $Z_{t-} \in \mathcal Z^{\mathrm n}_{t}$.
	
	3. Almost surely, for any compact interval $I\subset \R_+^*$, the number of discontinuities of $(Z_t)_{t\in I}$ is finite and $(Z_t)_{t\in I}$ is smooth away from the jumps.
	
	4. $(Z_t)_{t\ge 0} = (T^{-\frac12}Z_{T t})_{t\ge 0}$ in law for all $T > 0$.
	
	5. The variable $Z_1$ has a bounded density and there exists $c>0$ such that, for all $z \ge 0$,
	$$
		{ c} \, \ed^{- z/c} \; \le \; \bP(|Z_1| \ge z) \; \le \; { \frac1c} \, \ed^{-cz}.
	$$ 
\end{proposition}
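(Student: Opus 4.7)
I would split the statement into four blocks and treat them in the following order: (i) the scaling identity (item~4), which I prove first because it will be used throughout; (ii) existence of $L_t,R_t$ and the alternation~\eqref{eq: alternative stable unstable}; (iii) càdlàg regularity and the jump characterisation (items~1--3); and (iv) the bounded density together with the two-sided exponential tail (item~5).

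For (i), Brownian scaling $B(\sqrt T\,\cdot)\stackrel{\mathrm{law}}{=}T^{1/4}B(\cdot)$ combined with the heat-kernel identity $P_{Tt}(\sqrt T x)=T^{-1/2}P_t(x)$ gives $V(Tt,\sqrt T x)\stackrel{\mathrm{law}}{=}T^{1/4}V(t,x)$, hence $u(Tt,\sqrt T x)\stackrel{\mathrm{law}}{=}T^{-1/4}u(t,x)$, as processes in $(t,x)$. The sign of $\partial_xu$ is preserved, the ODE~\eqref{eq:deriv r} for the trajectories $r$ is invariant, and therefore the whole selection procedure producing $L,R,Z$ is covariant under the change of variables $(t,x)\mapsto(Tt,\sqrt T x)$, which yields $(T^{-1/2}Z_{Tt})_{t\ge 0}\stackrel{\mathrm{law}}{=}(Z_t)_{t\ge 0}$.

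The backbone of (ii) and (iii) is the non-crossing of the backward trajectories, a direct consequence of the uniqueness in Lemma~\ref{lem: definition r}: for $x_1<x_2$ in $\mathcal Z_t^{\mathrm s}\cup\mathcal Z_t^{\mathrm u}$, one has $r_{(t,x_1)}(s)<r_{(t,x_2)}(s)$ for every $s\in(0,t]$, hence $r_{(t,x_1)}(0)\le r_{(t,x_2)}(0)$. Together with local finiteness of stable/unstable zeros and the asymptotics $r_{(t,x)}(0)\to\pm\infty$ as $x\to\pm\infty$ (to be extracted from Section~\ref{sec: zeros}), this makes $\{x:r_{(t,x)}(0)<0\}$ non-empty and bounded above and $\{x:r_{(t,x)}(0)>0\}$ non-empty and bounded below, with $L_t,R_t$ attained. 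For the alternation, the delicate point is to establish that, almost surely for every $t>0$, no trajectory in $(L_t,R_t)$ reaches the origin at time $s=0$; granted this, $L_t$ and $R_t$ are consecutive simple zeros of $u(t,\cdot)$, so the signs of $\partial_xu$ at them must alternate. Items~1--3 then follow from the implicit function theorem applied to $u(t,r)=0$ at a simple zero (which produces \eqref{eq:deriv r} and smoothness of $Z$ between jumps), together with the observation that $Z$ can only become discontinuous at a time $t$ when the stable zero it is tracking coalesces with an unstable neighbour, i.e.\@ exactly when $Z_{t-}\in\mathcal Z_t^{\mathrm n}$; local finiteness of such bifurcation times on compact subsets of $\R_+^*$ follows from analyticity of $u$ and a compactness argument on the neutral zero set inside a fixed space-time rectangle.

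The main obstacle is block~(iv). Thanks to item~4 I only need to treat $Z_1$. For the bounded density and the exponential upper tail, I would use Gaussian small-ball estimates for $u(1,\cdot)$ together with the fact that $Z_1$ lies at a stable zero of $u(1,\cdot)$ selected close to the origin by the rule of Proposition~\ref{pro: Z process}; the upper bound reduces, after controlling the density of zeros, to the probability that no such zero exists in $[-z,z]$. The hard part is the matching exponential \emph{lower} tail: I would construct an explicit event of probability at least $e^{-z/c}$ on which $B$ creates, on a window of size of order $z$ around the origin, a monotone profile forcing every trajectory $r_{(1,x)}$ issued from a stable zero of $u(1,\cdot)$ inside that window to remain in the window all the way down to $s=0$, so that $Z_1$ is pushed to distance at least of order $z$ from $0$. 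Sharpening such a "valley" event to obtain matching exponential rates, while simultaneously ruling out fast escape of trajectories before time~$1$ (which would short-circuit the selection of $Z_1$), is the main technical difficulty I anticipate.
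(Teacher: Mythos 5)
Your outline follows the paper's architecture (scaling first, then existence/alternation, then regularity, then tails), but it leaves the genuinely hard steps unproved. The most important one is exactly the point you flag and then skip: showing that, almost surely, \emph{no} backward trajectory $r_{(t,x)}$ lands at the origin at time $0$. Without this, neither the alternation \eqref{eq: alternative stable unstable} nor the well-definedness of $Z$ (nor your item~2 argument, which uses the same fact) is established. The paper's proof of it is short but essential: for fixed $t$, the set $\cZ_t^{\mathrm s}\cup\cZ_t^{\mathrm u}$ is countable, each variable $r_{(t,z_k)}(0)$ has at most countably many atoms, so $\bP(\exists x:\,r_{(t,x)}(0)=y)$ can be positive for at most countably many $y$; by translation invariance this probability is constant in $y$, hence zero, and monotonicity in $t$ gives all $t$ at once. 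Nothing in your proposal substitutes for this. A second gap is your justification of item~3: analyticity of $u$ plus compactness does \emph{not} yield finiteness of the neutral set $\{u=\partial_x u=0\}$ in a rectangle (for a general analytic function this set can be a curve, e.g.\ $(x-t)^2$); you need the a.s.\ non-degeneracy of the Gaussian triple $(u,\partial_t u,\partial_x u)$ (the paper's Lemma~\ref{lem: proba zero events}, proved via a density bound and a discretisation/union bound) together with the implicit-function computation showing each neutral zero is isolated. Likewise, continuity of $Z$ at $t=0$ (part of item~1) is not covered by your implicit-function-theorem argument and requires the quantitative environment control of Section~\ref{sec: description of the environment}.

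For item~5 your plan inverts the difficulty. The lower tail, which you single out as the main obstacle and propose to attack with a bespoke ``valley'' construction, has a one-line reduction you missed: since $Z_1\in\cZ_1$, the event $\{u(1,x)>0\ \forall x\in[-z,z]\}$ is contained in $\{|Z_1|\ge z\}$, and positive correlations of the Gaussian field $u(1,\cdot)$ let you bound the probability of positivity on $[-z,z]$ from below by a product over $O(z)$ unit blocks, giving $c\,\ed^{-z/c}$ directly, with no control of trajectories needed. Conversely, your reduction of the \emph{upper} tail to ``no zero of $u(1,\cdot)$ in $[-z,z]$'' is not correct as stated: the mere presence of a zero of $u(1,\cdot)$ near the origin does not bound $|Z_1|$, because $Z_1$ is selected through the backward trajectories $r_{(1,x)}$. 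What is needed is an event guaranteeing zeros $x^{\pm}$ at distance of order $z$ on both sides of $0$ whose trajectories keep a definite sign all the way down to time $0$ (so that $r_{(1,x^+)}(0)>0$ and $r_{(1,x^-)}(0)<0$, hence $L_1,R_1\in[-Cz,Cz]$); this is precisely what the multiscale sign grid $G(0,\alpha)$ of \eqref{eq: def G} provides, with exponentially small complement by \eqref{eq:controle G}. Your density argument via small-ball estimates is plausibly in the right direction, but it too must pass through $\bP(Z_1\in[x,x+\varepsilon])\le\bP(\cZ_1\cap[x,x+\varepsilon]\neq\emptyset)$ and a quantitative Taylor/independence argument for $(u(1,0),\partial_x u(1,0))$, which you do not supply.
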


\begin{remark}
\label{rq:aussiLR}
By a straightforward adaptation of the proof, items $1$ to $5$ above can be shown to hold as well with $L$ or $R$ in place of $Z$.
\end{remark}

\begin{figure}[h]
    \centering
   	\includegraphics[draft=false,height = 8cm,width = 14cm]{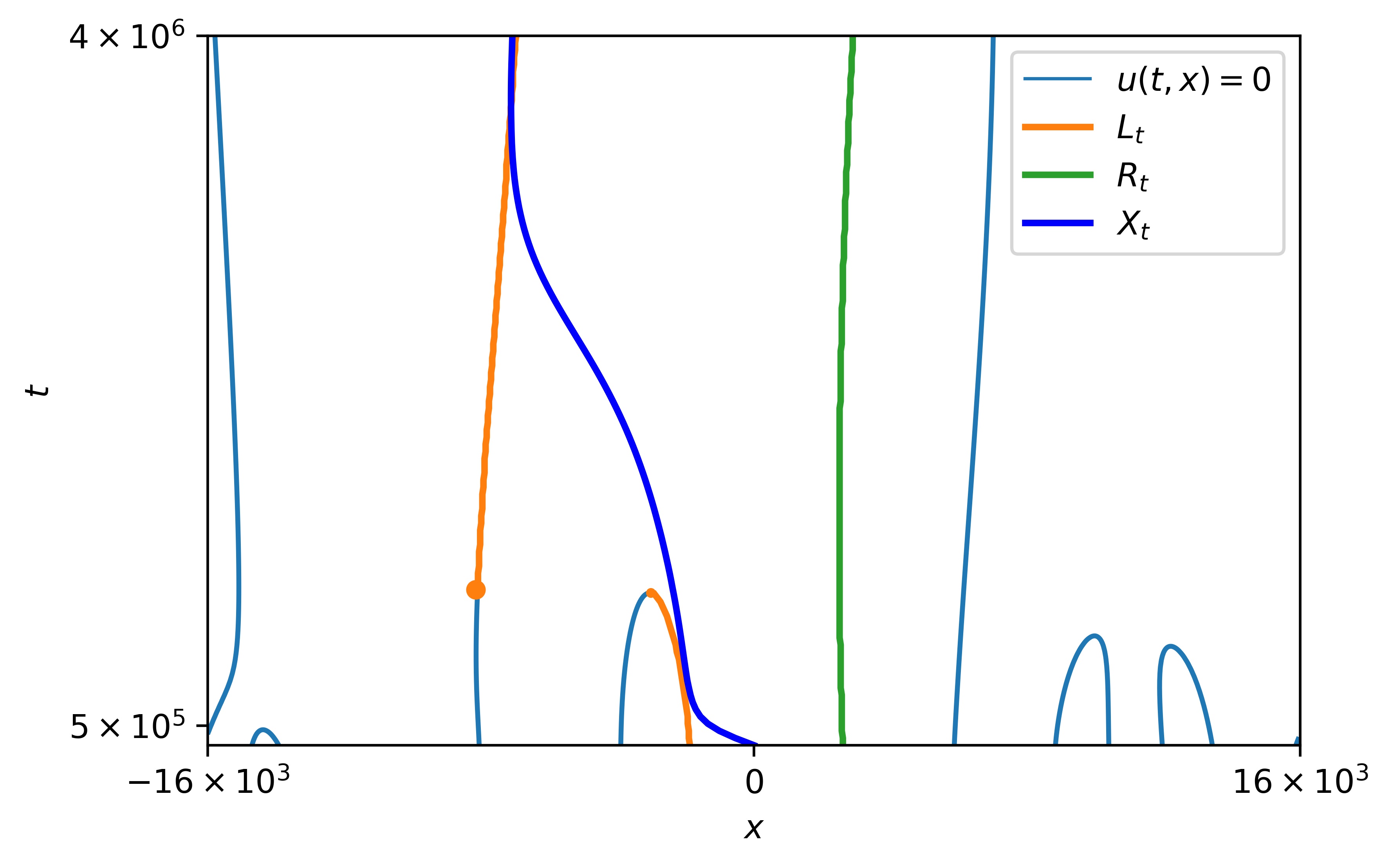}
	%\captionsetup{width=.9\linewidth}
    \caption{
    The processes $L$, $R$ and $X$, as well as the zeros of the velocity field $u$. 
    The realization of the environment is the same as on Fig.~\ref{fig: X and V}.}
    \label{fig: 0LRX}
\end{figure}

We now come to our result on the long time behavior of $X$: 

\begin{theorem}\label{the: long time}%[diffusive long time behavior]
When the space of càdlàg function from $\R^+$ to $\R$ is endowed with the Skorokhod's $\cM_1$ topology for the convergence on compact sets, the following convergence holds:
\begin{equation}\label{eq: long time proba}
	\frac{1}{T^{1/2}} \left( X_{\theta T} - Z_{\theta T} \right)_{\theta\ge 0}
	\; \xrightarrow[T\to +\8]{(probability)} \; 
	0
\end{equation}
and thus in particular, by the scaling relation in item 4.\@ in Proposition~\ref{pro: Z process}, 
\begin{equation}\label{eq: long time law}
	\left(\frac{X_{\theta T}}{T^{1/2}} \right)_{\theta\geq 0}
	\; \xrightarrow[T\to +\8]{(law)} \; 
	(Z_\theta)_{\theta\ge 0}.
\end{equation}
\end{theorem}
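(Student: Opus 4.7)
My plan is first to reduce \eqref{eq: long time law} to \eqref{eq: long time proba} via the self-similarity in item~4 of Proposition~\ref{pro: Z process}, and then to establish~\eqref{eq: long time proba} by a coupling argument on rescaled trajectories. I set $\tilde X^T_\theta := X_{\theta T}/\sqrt T$ and introduce the rescaled velocity field $\tilde u^T(\theta,y) := \sqrt T\,u(\theta T,\sqrt T\,y)$, so that $\partial_\theta \tilde X^T_\theta = \tilde u^T(\theta,\tilde X^T_\theta)$ with $\tilde X^T_0=0$. Brownian scaling $B(\sqrt T\,\cdot)\stackrel{(d)}{=}T^{1/4}B(\cdot)$ combined with the representation~\eqref{eq: u representation} yields $\tilde u^T \stackrel{(d)}{=} T^{1/4}\bar u$ with $\bar u \stackrel{(d)}{=} u$. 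After rescaling, the particle therefore evolves in a statistically unchanged environment but with dynamics sped up by a factor $T^{1/4}$, which strongly amplifies the attraction of the stable zeros; the natural companion $\tilde Z^T_\theta := Z_{\theta T}/\sqrt T$ is the $Z$-process built from $\bar u$. The target is to show that $\tilde X^T$ locks onto $\tilde Z^T$ as $T\to\infty$.

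Fix a compact window $[\theta_0,\theta_1]\subset(0,+\infty)$ and $\epsilon>0$. By Proposition~\ref{pro: Z process}(3) and the above identity in law, the jump times $\sigma_1<\cdots<\sigma_K$ of $\tilde Z^T$ inside $[\theta_0,\theta_1]$ almost surely form a finite set whose distribution does not depend on $T$. On the complement $I_\epsilon := [\theta_0,\theta_1]\setminus\bigcup_k(\sigma_k-\epsilon,\sigma_k+\epsilon)$, the trap slope $\partial_y\bar u(\theta,\tilde Z^T_\theta)$ stays below $-\kappa<0$ on a high-probability event. Linearising around $\tilde Z^T$, the deviation $\tilde Y_\theta := \tilde X^T_\theta-\tilde Z^T_\theta$ obeys an ODE whose linear part has rate at least $\kappa\, T^{1/4}$ and whose forcing $-\partial_\theta\tilde Z^T_\theta$ is uniformly bounded on $[\theta_0,\theta_1]$ thanks to~\eqref{eq:deriv r} together with heat-kernel estimates on $\partial_{xx}u/\partial_x u$. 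A Gronwall-type argument then gives $\sup_{\theta\in I_\epsilon}|\tilde Y_\theta|=O(T^{-1/4})$, provided $\tilde Y$ is already small at each left-endpoint $\sigma_k+\epsilon$ of the subintervals making up $I_\epsilon$.

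Near a jump time $\sigma_k$, the local dynamics is a saddle-node bifurcation, which I would treat in the normal form $\bar u(\theta,y)\simeq a(\theta-\sigma_k)-b(y-y_\ast)^2$. Once the stable-unstable pair has annihilated, the particle is released and, accelerated by the prefactor $T^{1/4}$, makes a monotone transit toward the closest surviving stable zero, which I claim is precisely $\tilde Z^T_{\sigma_k+}$. The transit time is $o(1)$ as $T\to\infty$, during which $\tilde X^T$ interpolates monotonically between $\tilde Z^T_{\sigma_k-}$ and $\tilde Z^T_{\sigma_k+}$; this is exactly the type of steep continuous ramp that the Skorokhod $\cM_1$-topology identifies with a jump of the limit. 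The contribution of the initial layer $[0,\theta_0]$ is handled by tightness of $\tilde X^T$ and $\tilde Z^T$ together with the $\sqrt T$-normalisation and becomes small as $\theta_0\to 0$. Gathering these ingredients, \eqref{eq: long time proba} follows by letting $T\to\infty$, then $\epsilon\to 0$, then $\theta_0\to 0$.

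The main obstacle is to check, at each bifurcation, that the trap selected by the particle matches $\tilde Z^T_{\sigma_k+}$ chosen by the trace-back definition of $Z$. The key observation is that $\tilde X^T_\theta$ is at all times confined between two unstable zeros of $\bar u(\theta,\cdot)$ whose trace-backs at $s=0$ lie on opposite sides of $0$: away from bifurcations, the flow cannot cross an unstable zero, and $X_0=0$. By Lemma~\ref{lem: definition r} and the definition~\eqref{eq: def Lt Rt}, the stable zero enclosed between these two unstable zeros is precisely $\tilde Z^T_\theta$. This enclosure property is preserved across bifurcations: at each saddle-node event only one of the two bounding unstable zeros can disappear (merging with the current stable trap), so that the enlarged basin of the particle is still bounded by two unstable zeros whose trace-backs lie on opposite sides of $0$, and hence again encloses $\tilde Z^T_{\sigma_k+}$.
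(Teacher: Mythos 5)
Your overall architecture coincides with the paper's: rescale to a particle driven by $T^{1/4}u$ in a statistically identical environment, lock onto the stable zero away from jumps, analyse the saddle-node at a jump, and let the $\cM_1$ topology absorb the steep monotone ramp (the paper implements this through Whitt's oscillation criteria at continuity and jump points). The genuine gap is in the step you yourself single out as the main obstacle: identifying which trap the particle follows. Your argument rests on two claims that do not hold as stated. First, ``away from bifurcations the flow cannot cross an unstable zero'' is false for a moving zero: on the zero curve the particle's velocity vanishes while the curve moves at nonzero speed, so the curve can sweep past the particle and expel it into the neighbouring basin. Excluding this requires a quantitative barrier argument --- an $\epsilon$-shifted curve along which $|u|\ge c\epsilon$, whose crossing would force the barrier's bounded speed to beat the particle's speed of order $T^{1/4}c\epsilon$, impossible for large $T$ --- and the constant $c$ comes from compactness on time intervals bounded away from $0$, so this cannot be invoked ``at all times''. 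Second, and more seriously, the induction ``preserved across bifurcations, starting from $X_0=0$'' has no valid starting point: saddle-node events accumulate at $t=0$, there are no zeros at $t=0$, and the a priori localisation of both $\tilde X^T_t$ and $\tilde Z^T_t$ at scale $(\log t)^2\,t^{1/2}$ is larger than the inter-zero spacing $\sim t^{1/2}$, so smallness of both processes near $0$ (your ``tightness'' step, which itself needs the grid estimate behind \eqref{eq: a priori bound Y}) does not tell you that the particle sits in the basin of $Z_{\theta_0}$ at the time $\theta_0$ where your Gronwall argument would have to start.

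The paper fills exactly this hole with the corridor \eqref{eq: corridor 1}: the barriers are not the instantaneous unstable zeros but the trace-back curves $r_{(t,L_t)}$ and $r_{(t,R_t)}$ shifted by $\epsilon$, whose time-$0$ endpoints straddle the origin at a fixed, $T$-independent distance. This makes the corridor trivially valid near $s=0$ (via \eqref{eq: a priori bound Y}), the barrier comparison with speed $T^{1/4}$ applies on $[\tau,t]$ by compactness, and selection of the correct trap then follows from the very definition of $L$ and $R$ through trace-backs, rather than from a basin-tracking induction; an analogous sweep argument on $[t_0-2\delta,t_0-\delta]$ shows the particle actually reaches the $\epsilon$-neighbourhood of the trap. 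Unless you replace your enclosure induction by an argument of this type, the proof of \eqref{eq: long time proba} is incomplete. (A minor further point: your claim that at a bifurcation ``only one of the two bounding unstable zeros can disappear, merging with the current stable trap'' ignores mergers of a bounding unstable zero with a stable zero outside the enclosure; this is harmless for the geometry but symptomatic of the missing quantitative control.)
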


\begin{remark}
	Since the process $(X_{\theta T})_{\theta \ge 0}$ is continuous and the process $(Z_{\theta T})_{\theta \ge 0}$ has jumps, 
	it is not possible to obtain the convergence in the Skorokhod's $\cJ$ topology. 
	Let us remind the definition of the $\cM_1$ topology, see~\cite{whitt}. 
	Let $D([0,1],\R)$ be the space of real càdlàg functions on $[0,1]$. 
	For $f \in D([0,1],\R)$, the completed graph of $f$ is defined as
	$$
		\mathcal G(f) \; = \; \{(t,x) \in [0,1]\times \R : x \in [f(t-)\wedge f(t),f(t-)\vee f(t)] \}
	$$
	with $f(t-) = \lim_{s\to t, s < t}f(s)$.
	We define an order on $G(f)$ as follows
	$$
		(t,x) \le (s,y) 
		\quad \Leftrightarrow \quad
		t < s \quad \text{or} \quad t=s \text{ and } |x-f(t-)| \le |y-f(s-)|.
	$$
	A parametrization of $\mathcal G(f)$ is defined to be a continuous map $\varphi:[0,1]\to \mathcal G(f)$
	such that $\varphi(0) = (0,f(0))$, $\varphi(1) = (1,f(1))$ and $\varphi$ is non-decreasing for the above order. 
	The set of parametrizations of $\mathcal G(f)$ is denoted $\Pi(f)$.
	The $\cM_1$ distance between two elements of $D([0,1],\R)$ is defined as
	$$
		d_{\cM_1} (f,g) 
		\; = \; 
		\inf_{\varphi \in \Pi(f),\psi \in \Pi(g)} 
		\left\{\sup_{0 \le \tau \le 1}|\varphi(\tau) - \psi(\tau)|_\infty\right\}
	$$
	where $|(t,x)|_\infty = \max\{|t|,|x|\}$. {The definition is completely analogous on any other compact interval of $\R$.}
\end{remark}

%%%%%%%%%%%%%%%%%%%%%%%%%%%%%%%%%%%%%%%%%%%%%%%%%%
%%%%%%%%%%%%%%%%%%%%%%%%%%%%%%%%%%%%%%%%%%%%%%%%%%
%%%%%%%%%%%%%%%%%%%%%%%%%%%%%%%%%%%%%%%%%%%%%%%%%%
%%%%%%%%%%%%%%%%%%%%%%%%%%%%%%%%%%%%%%%%%%%%%%%%%%

\section{Description of the environment}\label{sec: description of the environment}

We establish here several features of the environment $u$, that will be used throughout this text.  
We first show the scaling property~\eqref{eq: scaling} below that will, among other things, play a key role in establishing Theorem~\ref{the: long time}.
Second, we construct a grid of space-time points such that $u$ keeps the same sign on some time interval around each of these points, 
see Proposition~\ref{prop: main step exp decay} below as well as the subsequent constructions. 
This grid allows to derive a priori bounds on the processes $X$, $L$, $R$ and $Z$, that depend only on the sign of the the velocity field $u$.
Third, we obtain estimates on the supremum of $u$ and its derivatives, see Lemma~\ref{lem: upper bounds on u and del u} below.  
These estimates will be mainly needed in the proof of Proposition~\ref{pro: X well defined}. 

\bigskip
\textbf{Scaling property.}  
For any $\alpha > 0$, 
\begin{equation}\label{eq: scaling}
	(u (t,x))_{t,x} \; \stackrel{(law)}{=} \; (\alpha^{1/4} u (\alpha t , \alpha^{1/2}x))_{t,x}.
\end{equation}
Indeed, both fields are Gaussian, centered and have the same covariance and, from the representation of the field $u$ in~\eqref{eq: u representation}, we compute
\begin{equation}\label{eq: covariance}
	\bE (u(t,x) u(s,y)) 
	\; = \;
	\int_\R P_t(x-z) P_s(z-y) \dd z 
	\; = \; 
	P_{t+s}(x-y) , 
\end{equation}
and $\alpha^{1/2} P_{\alpha (t+s)}(\alpha^{1/2}(x-y)) = P_{t+s}(x-y)$ from \eqref{eq: gaussian kernel}. 

\bigskip
\textbf{Sign of the field $u$.}
Given $\ell >0$, let 
\begin{equation}\label{eq: D+}
	D(\ell) \; = \; \{\exists y : |y| \le \ell/2 \text{ and }  \forall s \in [1/2,1],  u(s,y) > 0 \}.
\end{equation}
The following proposition provides a control on the probability of $D(\ell)$:
\begin{proposition}\label{prop: main step exp decay}
	There exists $C>0$ such that, for all $\ell>0$,
	$$
		\bP\left(\overline{D(\ell)}\right)
		\; \le \; 
		\frac1C \ed^{-C \ell}.
	$$
\end{proposition}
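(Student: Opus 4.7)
The plan is a block decomposition of $[-\ell/2,\ell/2]$ that exploits the very fast (Gaussian) decay of the spatial correlations of $u$. Fix a large constant $L>0$ to be chosen, and place $N=\Theta(\ell/L)$ centers $y_1,\dots,y_N$ so that the extended windows $J_i:=[y_i-L,y_i+L]$ are pairwise disjoint inside $[-\ell/2,\ell/2]$. Using \eqref{eq: u representation}, I would split $u(s,y_i)=V_i(s)+W_i(s)$ with $V_i(s):=-\int_{J_i}P_s(y_i-z)\,\dd B(z)$ the local part and $W_i(s):=-\int_{\R\setminus J_i}P_s(y_i-z)\,\dd B(z)$ the far field. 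Disjointness of the $J_i$'s forces the $(V_i)_i$ to be mutually \emph{independent} and $V_i\perp W_i$ for each $i$; meanwhile each $W_i(s)$ is centered Gaussian with variance $\int_{|z|>L}P_s(z)^2\,\dd z=O(e^{-L^2/8})$ uniformly in $s\in[1/2,1]$.

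Next I would establish, for a small fixed $c>0$, two block-wise estimates. (i) \emph{Local success}: $\bP(V_i(s)>2c\text{ for all }s\in[1/2,1])\ge p_V>0$ uniformly in $i$, for $L$ large, by $L^2$-convergence $V_i\to u(\cdot,y_i)$ as $L\to\infty$ combined with the fact that the continuous, non-degenerate Gaussian process $s\mapsto u(s,0)$ stays above $3c$ on $[1/2,1]$ with strictly positive probability. (ii) \emph{Far-field smallness}: $\bP(\sup_{s\in[1/2,1]}|W_i(s)|\ge c)\le\delta_0(L)$ with $\delta_0(L)\to 0$, by Borell--TIS applied to the Gaussian process $s\mapsto W_i(s)$. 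Set $F_i:=\{V_i(s)>2c\text{ and }|W_i(s)|<c\text{ for all }s\in[1/2,1]\}$: on $F_i$ one has $u(s,y_i)>c>0$ for every $s$, so $D(\ell)$ occurs, and $\bP(\overline{D(\ell)})\le\bP(\bigcap_i F_i^c)$.

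It then remains to turn these block estimates into the exponential bound $\bP(\bigcap_i F_i^c)\le e^{-cN}\le e^{-C\ell}$. Choose $L$ large enough that $\delta_0\le 1/4$ and such that the pairwise covariances satisfy $\cov(W_i(s),W_j(s'))\lesssim e^{-|y_i-y_j|^2/c_0}$ (together with the analogous decay for $\cov(V_i,W_j)$ when $i\ne j$). Using Gaussian concentration (Borell--TIS for $B$ seen on its Cameron--Martin space, applied to a Lipschitz regularization of the counting functional), the random set $\mathcal{S}:=\{i:\sup_s|W_i(s)|<c\}$ has cardinality $|\mathcal{S}|\ge N/2$ with probability at least $1-e^{-cN}$. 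On this event, the unconditional independence of the $V_i$'s together with the doubly-exponential smallness of the cross-correlations gives $\bP\bigl(\bigcap_{i\in\mathcal{S}}(V_i^+)^c\mid(W_k)_k\bigr)\le(1-p_V/2)^{|\mathcal{S}|}\le e^{-cN}$, yielding the claim.

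The main technical obstacle is this last concentration step on $|\mathcal{S}|$: the indicators $\ind_{\{\sup_s|W_i|<c\}}$ are not genuinely independent (they share the far-field increments of $B$), so Bernstein/Chernoff for i.i.d.\@ sums cannot be applied directly, and Chebyshev only yields a polynomial rate. The very rapid Gaussian decay of the $W_i$-correlations is what allows exponential concentration; making this rigorous should proceed either by Gaussian isoperimetry applied to a suitable smoothing of the counting functional, or by Hanson--Wright for the quadratic form $\sum_i\sup_s W_i(s)^2$ (which dominates $|\mathcal{S}^c|$ and is a quadratic functional of the underlying Gaussian vector with small operator and Frobenius norms).
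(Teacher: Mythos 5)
Your overall architecture (independent local part plus small far field on each block, a uniform positive probability of ``local success'', then a combinatorial upgrade to exponential decay) is close in spirit to the paper's proof, but the step you yourself flag as the main obstacle is a genuine gap, and it is precisely where the work lies. For the concentration of $|\cS|$: a Cameron--Martin perturbation $h$ of $B$ with $\|h\|_{L^2}=1$ moves \emph{every} $W_i$ simultaneously by up to $C\ed^{-cL^2}$, so a Lipschitz smoothing of the counting functional has Lipschitz constant of order $N\ed^{-cL^2}$; Borell--TIS at deviation scale $N$ then only gives a bound of order $\exp(-c\,\ed^{cL^2})$, which is uniform in $N$ but does \emph{not} decay exponentially in $N$ (hence not in $\ell$). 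To get $\ed^{-cN}$ you must genuinely exploit the near-orthogonality of the far-field kernels (Gram matrix with off-diagonal entries $\lesssim \ed^{-|y_i-y_j|^2/8}$), and handle the supremum over $s\in[1/2,1]$ inside the quadratic form; this Hanson--Wright-type argument is the missing content, not a routine citation.

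The second unproven step is the conditional product bound $\bP\bigl(\bigcap_{i\in\cS}\{\exists s:\,V_i(s)\le 2c\}\mid (W_k)_k\bigr)\le (1-p_V/2)^{|\cS|}$. Vanishing covariance between the $V_i$'s does not give conditional independence once you condition on the $W_k$'s (which are correlated with all blocks), and the conditional mean of $V_i$ given $(W_k)_k$ is a linear functional of \emph{all} the values $W_k$; these are controlled only for $k\in\cS$, while on bad blocks they may be arbitrarily large, so ``doubly-exponentially small cross-correlations'' alone do not bound the shift. You would need an additional a priori control of the far fields on every block, or a conditioning on coarser per-block quantities. This is exactly what the paper's construction achieves: it encodes the far field through i.i.d.\ per-block variables $H_k$ with exponential tails (steps 1--2 of its proof), gets a \emph{deterministic} truncation bound $|u-\tilde u|<1$ at sites whose neighbouring $H$'s are small (step 3), obtains ``most blocks are good'' from a plain Chernoff bound for i.i.d.\ variables (step 4), and the conditional factorization in step 6 is exact because every quantity involved is a function of Brownian increments on disjoint intervals. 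A minor additional point: your item (i) takes $\bP(\inf_{s\in[1/2,1]}u(s,0)>3c)>0$ as a ``fact''; it is true but requires an argument (the paper proves it via positive correlations and nested intervals; alternatively one can argue via the support of the Gaussian measure), so it should not be left unjustified.
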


\begin{remark}\label{rem: extends proposition}
By symmetry of $u$, by translation invariance and by the scaling property~\eqref{eq: scaling}, 
we deduce from the above proposition that there exists $C>0$ such that,
for any $x\in\R$ and any $t>0$, 
$$
	\bP\left(\overline{\{ \exists y : |y-x| \le \ell \sqrt t /2 \text{ and } \forall s \in [t/2,t], \pm u (s,y) > 0 \}}\right)
	\; \le \; 
	\frac1C \ed^{-C \ell}.
$$
\end{remark}

\begin{proof}
We divide the proof into several steps. 

\bigskip

1. Given a compact interval $I \subset \R$ and some $\alpha > 0$,
\begin{equation}\label{eq: bound sup phi}
		\bP \left( \sup \left\{ \int_I \varphi (x) \dd B(x), \varphi\in\cC^1(I), \| \varphi\|_{\cC^1} \le 1 \right\} > \alpha \right) 
		\; \le \; 
		\frac{ 1 + 2|I|^{3/2}}{\alpha}
\end{equation}
with $\|\varphi\|_{\cC^1} = \max_{x\in I}\{|\varphi(x)|+|\varphi'(x)|\}$.

Indeed, let $I = [a,b]$ be some compact interval, and let  $\varphi \in \cC^1(I)$. 
An integration by parts yields
$$
	\int_I \varphi (x) \dd B(x) \; = \; \varphi(b) (B(b) - B(a)) - \int_I \varphi'(x) (B(x) - B(a)) \dd x.
$$
Hence, 
$$
	\sup_{\varphi : \|\varphi\|_{\cC^1} \le 1} \int_I \varphi (x) \dd B(x) 
	\; \le \;
	|B(b) - B(a)| + \int_I |B(x) - B(a)| \dd x
$$
and, by Markov inequality, for any $\alpha > 0$, 
\begin{align*}
	\bP\left( \sup_{\varphi : \|\varphi\|_{\cC^1} \le 1} \int_I \varphi (x) \dd B(x) \; > \;  \alpha \right) 
	\; &\le \; \frac1\alpha \bE\left( |B(b) - B(a)| + \int_I |B(x) - B(a)| \dd x \right) \\
	\; &\le \; \frac{1 + 2(b-a)^{3/2}}{\alpha}.
\end{align*}

\bigskip

2. We introduce some definitions and notations. 
Let $\ell_\star \ge 1$. For $k \in \Z$, we define the points $x_k = k \ell_\star$ and the intervals 
$$
	I_k \; = \; \left[ x_k - \ell_\star/2 , x_k + \ell_\star/2 \right] \, ,
$$
as well as the variables 
$$
	h_k \; = \; \frac1{\ell_\star^{3/2}}\sup\left\{ \int_{I_k} \varphi(x) \dd B(x), \; \varphi\in\cC^1(I_k), \| \varphi\|_{\cC^1} \le 1\right\}.
$$
Let also $n_0\in\N^*$ and let us define the variables 	
$$
	H_k \; = \; 0 \vee \left\lfloor \frac{\ln h_k}{n_0} \right\rfloor .
$$
We observe that, by \eqref{eq: bound sup phi}, $\bP(H_k = 0)$ goes to $1$ as $n_0$ goes to infinity, uniformly in $\ell_\star$. 

\bigskip

3. Given $k\in\Z$ and $t>0$, let us define 
$$
	\tilde u(t,x_k) \; = \; -\int_{I_k} P_t(x_k - y) \dd B(y).
$$
We want to control the difference between $\tilde u (t,x_k)$ and $u(t,x_k)$. 
For this, let us introduce the variables 
$$
	\eta_k \; = \; 
	\left\{ 
	\begin{array}{ll} 
		1 & \text{if}\;\; H_{k-j} \le |k-j|\;\quad \forall j \in \Z, \\
		0 & \text{otherwise} 
	\end{array}
	\right.
$$
We claim that, given $n_0$, for all $\ell_\star$ large enough, and for all $k$ such that $\eta_k = 1$, 
\begin{equation}\label{eq: troncature}
		\sup_{1/2\le t \le 1} |u(t,x_k) - \tilde u(t,x_k)| < 1 . 
\end{equation}

Let us show \eqref{eq: troncature}. By translation invariance, it suffices to consider the case $k=0$. For all $t>0$, 
$$
	u(t,0) - \tilde u(t,0) \; = \; -\sum_{j\in\Z\backslash\{0\}} \int_{I_j} P_t(y) \dd B(y).
$$
Since $\eta_0=1$, it holds that $h_j \le \ed^{|j|n_0}$ for all $j\in\Z\backslash\{0\}$. 
Moreover, there exist $C,c>0$ such that $\| -P_t(\cdot) \|_{\cC^1(I_j)}\leq C \ed^{-c |j|^2\ell_\star^2}$ for all $j\in\Z$, so that we finally obtain
$$
	\sup_{1/2\le t \le 1} |u(t,0) - \tilde u(t,0)| 
	\; \le \; 
	\sum_{j\in\Z\backslash\{0\}} C \ell_\star^{3/2} \ed^{-c |j|^2\ell_\star^2}  \ed^{|j|n_0},
$$
and this becomes smaller than $1$ for $\ell_\star$ large enough.

\bigskip 

4. For all $n_0$ large enough, for all $\ell_\star\ge 1$, and for all $N\in \N$,
\begin{equation}\label{eq: exponential decay bad configurations}
	\bP(|\eta^N|_1 < N) \; \le \; \ed^{-N},
\end{equation}
where $\eta^N = (\eta_{-N}, \dots , \eta_N)$ and $|\eta^N|_1 = \sum_{k = -N}^N \eta_k$.

Indeed, on the event
$$
	E_N \; = \; \{H_j \leq \left|\ |j|-N \ \right| \text{ for all }|j|>2N \},
$$
it holds that
$$
	\{k \in \Z : |k|\le N, \eta_k = 0\} \;\subset\; \bigcup_{ -2N \leq j \leq 2N} \{j-(H_j-1),\cdots, j + (H_j-1)\}
$$
with the convention $\{a,\cdots,b\}=\emptyset$ if $b<a$. Therefore 
$$
	|\{k \in \Z : |k|\le N, \eta_k = 0\}| \;\leq\; \sum_{ -2N \leq j \leq 2N}  \left[(2H_j-1)\vee 0\right] \;\leq\;  \sum_{ -2N \leq j \leq 2N}  2H_j.
$$
We thus obtain
\begin{equation}\label{eq: exp decay split two terms}
	\bP(|\eta^N|_1 < N)\; \leq\;  \bP(\sum_{ -2N \leq j \leq 2N}  2H_j> N+1) + \bP(\overline{E_N}).
\end{equation}
For the second term, since by \eqref{eq: bound sup phi}, for any $n\in\N$, $\bP(H_j\ge n) \le 3\ed^{- n_0 n}$ it holds that for $n_0$ large enough and all $N\geq 1$,
\begin{equation}\label{eq: exp decay split 2d term}
	\bP(\overline{E_N}) \; \leq \;  \sum_{|j|>2N} \bP(H_j > \left||j|-N \right| ) \; \leq \; \frac12 \ed^{-N}.
\end{equation}
For the first one, as the variables $(H_j)_{j\in\Z}$ are i.i.d.\@, we obtain
\begin{equation}\label{eq: exp decay split 1st term}
	\bP(\sum_{ -2N \leq j \leq 2N}  2H_j> N+1) \;\leq\; \ed^{-2(N+1)} \bE\left(\ed^{ 4H_0}\right)^{4N+1}.
\end{equation}
Finally, using once again that for any $n\in\N$, $\bP(H_j\ge n) \le 3\ed^{- n_0 n}$, 
one can choose $n_0$ large enough so that for all $N\ge 1$, the last term in \eqref{eq: exp decay split 1st term} is smaller that $\ed^{-N}/2$.
Inserting \eqref{eq: exp decay split 2d term} and \eqref{eq: exp decay split 1st term} into \eqref{eq: exp decay split two terms} yields the result.

\bigskip

5. There exists $p >0$ such that, for all $\ell_\star\ge 1$ and for all $k\in\Z$,
\begin{equation}\label{eq: p features}
	\bP\left(\inf_{1/2\le t \le 1}  \tilde u(t,x_k) \;>\; 1\right) \; = \; p.
\end{equation}

Again, to show this, it suffices to consider the case $k=0$.
We observe that, almost surely, $\inf_{1/2\le t \le 1}  \tilde u(t,0)$ is a continuous function of $\ell_\star$,
converging to $\inf_{1/2\le t \le 1} u(t,0)$ as $\ell_\star \to \infty$. 
Therefore, it is enough to establish the result for any fixed $\ell_\star \ge 1$ and for $u$ instead of $\tilde u$.
This last case can be handled with exactly the same proof, and we let $\ell_\star \ge 1$. 
Since the variables $(\tilde u(t,0))_{t>0}$ are positively correlated, for any $0< a < b$, it holds that 
\begin{multline*}
	\bP\left(\inf_{t\in[a,b]} \tilde u(t,0) \; > \; 1/2 \right)\\
	\; \ge \; 
	\bP\left(\inf_{t\in[a,(a+b)/2]} \tilde u(t,0) \; > \; 1/2 \right)
	\bP\left(\inf_{t\in[(a+b)/2,b]} \tilde u(t,0) \; > \; 1/2 \right).
\end{multline*}
Let us assume that $\bP\left(\inf_{1/2\le t \le 1}  \tilde u(t,0) > 1/2\right) = 0$. 
By the above property, we find a sequence of nested closed intervals $(I_n)_{n\ge 1}$ with $I_1 = [1/2,1]$ and $|I_n| = 2^{-n}$ such that, for all $n\ge 1$,
$\bP\left(\inf_{t \in I_n}  \tilde u(t,0) > 1/2\right) = 0$.
Let $t_0 \in \cap_{n\ge 1} I_n$. 
Since $\tilde u(\cdot,0)$ is continuous almost surely in $t_0$, 
$$
	\{ \tilde u(t_0,0)>1/2\} \; \stackrel{a.s.}{=} \; \bigcup_{n\ge1}\{ \tilde u(t,0)>1/2,t\in I_n\}, 
$$
and thus $\bP( \tilde u(t_0,0) > 1/2) = 0$. 
This is a contradiction.

\bigskip

6. We start now the proof of the proposition itself. 
Let $p$ be the constant featuring in \eqref{eq: p features},
let $n_0$ be large enough so that, for all $\ell_\star \ge 1$, $\bP(H_0) \ge 1-p/2$ and such that \eqref{eq: exponential decay bad configurations} holds, 
and finally let $\ell_\star$ be large enough so that \eqref{eq: troncature} holds. 

Let $N\in \N^*$, and let $\ell = (2N+1) \ell_\star$.
Since the events $D(\ell)$ are increasing with $\ell$, it suffices to show the proposition for $\ell$ of this type.
We start with 
\begin{align}
	\bP\left(\overline{D(\ell)}\right)
	\; &\le \; 
	\bP \left(\inf_{1/2 \le t \le 1} u (t,x) \; \le \; 0, \ \forall x \in[-\ell/2,\ell/2] \right)\nonumber\\
	\; &\le \; 
	\bP \left( \bigcap_{-N\le i \le N}\left\{ \inf_{1/2 \le t \le 1} u (t,x_i) \; \le \; 0 \right\}\right).\label{eq: 1st rough bound}
\end{align}
Let us denote by $A$ the event featuring in the right hand side of this last expression. 
From \eqref{eq: exponential decay bad configurations}, we obtain 
\begin{equation}\label{eq: some exp bound A}
	\bP(A) \; \le \; \sum_{\stackrel{\sigma\in\{0,1\}^{2N+1},}{|\sigma|_1 \ge N}} \bP(A\; |\; \eta^N = \sigma) \bP(\eta^N = \sigma) \; + \; \ed^{-N}.
\end{equation}
Moreover from \eqref{eq: troncature}, we deduce that for all $i\in \Z$,
$$
	\left\{ \inf_{1/2 \le t \le 1} u (t,x_i) \; \le \; 0 \right\} \cap \{\eta(i) = 1\}
	\;\subset\;
	\left\{ \inf_{1/2 \le t \le 1} \tilde u (t,x_i) \; \le \; 1 \right\}.
$$
Given $\sigma$ such that $|\sigma|_1 \ge N$, 
we define $-N\leq i_1<\cdots <i_N\leq N$ to be the $N$ distinct smallest indexes such that  for all $1\leq j \leq N$, $\eta_{i_j}=1$. 
We denote by $J$ the complementary set of the $(i_k)_{1\leq k \leq N}$ in $\Z$.
The event $\{\eta^N=\sigma\}$ can be written as
$$
	\left(\bigcap_{1\leq k\leq N}\{H_{i_k}=0\}\right) \cap \{(H_j)_{j\in J}\in B\}
$$
with $B$ some suitable event in $\N^{J}$.
Therefore, as the $(H_j)_{j\in \Z}$ are i.i.d., we obtain
%$$
%	\bP(A ,\eta^N = \sigma)
%	\; \leq \; 
%	\prod_{k=1}^N \bP\left(\inf_{1/2 \le t \le 1} \tilde u (t,x_{i_k})\; \leq\; 1 ,
%	H_{i_k} = 0\right) \bP\left((H_j)_{j\in J}\in B\right).
%$$
%This leads to
$$
	\bP(A |\eta^N = \sigma)\; \le\;
	\prod_{k=1}^N \bP\left(\inf_{1/2 \le t \le 1} \tilde u (t,x_{i_k}) \; \leq\; 1\Big |H_{i_k} = 0\right) 
	\; \le \; 
	\left(\frac{1-p}{1-p/2}\right)^N,
$$
and the proof follows by inserting this bound into \eqref{eq: some exp bound A}.
\end{proof}

In the following we make use of Proposition \ref{prop: main step exp decay} to give a property of the environment that we will use repeatedly till the end of the article. 
Let $K \ge 1$ be a constant that will be fixed below.
Given $k\geq 0$ and $\alpha \geq 1$, we define a finite family of space-time boxes covering 
$B(k,\alpha)=[0,2^k]\times \left[-K \alpha \sqrt{2^k}, +K \alpha \sqrt{2^k}\right]$ in the following way: 
For all $n\geq 0$, we define
$$
	t_n(k)=2^{k-n}  \quad \text{ and }\quad
	\ell_n(k,\alpha)= (\alpha +n^2)\sqrt {t_n(k)},
$$
and also the space intervals  
$$
	I_{n,j}(k,\alpha)=\left[j\ell_n(k,\alpha),(j+1)\ell_n(k ,\alpha)\right], \quad j\in \Z.
$$
We note $J_{n}(k,\alpha)$ the set of $j$ such that $I_{n,j}(k,\alpha)$ intersects $[ -K \alpha \sqrt{2^k}, K \alpha \sqrt{2^k}]$, 
so that $B(k,\alpha)$ is covered by the family of boxes $[t_{n+1}(k,\alpha),t_{n}(k,\alpha)]\times I_{n,j}(k,\alpha)$, 
for $n\geq 0$ and $j\in J_{n}(k,\alpha)$.

From now on we fix $K$ large enough so that for all $\alpha\ge 1$ and $k\ge 0$,
\beq \label{eq:defn0}
	\sum_{n\geq 0} 2 \ell_n(k,\alpha) \;\leq\; \frac{K }{3}   \alpha\sqrt{t_0(k)}.
\eeq
The reason for defining $K$ in this way will become clear later. 
For all $n\geq 0$ and $j\in J_{n}( k,\alpha)$, we consider the event
$$
	E_{n,j}(k,\alpha) 
	\; = \;
	\{\exists y_1,y_2 \in I_{n,j}(k,\alpha):\ \forall s \in [t_{n+1}(k ,\alpha),t_{n}(k{,\alpha})], u(s,y_1) > 0 \text{ and }u(s,y_2) < 0  \}.
$$
and we define
\beq
	\label{eq: def G}
	G(k,\alpha)\; = \; \bigcap_{n\geq 0,j\in J_{n}(k,\alpha)} E_{n,j}(k,\alpha).
\eeq

We note that, uniformly in $\alpha$ and $k$,  
$$
	\lfloor (2K)\ 2^{n/2}/(1+n^2)\rfloor \;\leq\; |J_{n}(k,\alpha)|\; \leq \;  (2K)\ 2^{n/2}.
$$ 
Hence, by Remark~\ref{rem: extends proposition}, 
\beq
	\label{eq:controle G}
	\bP\left(\overline{G(k,\alpha)}\right) \; \leq \; \sum_{n\geq 0}(2K) 2^{n/2} \frac1C \ed^{-C (\alpha+n^2)} 
\eeq
and, by Borel Cantelli lemma, we deduce that
\beq
	\label{eq:boite cle}
	\text{almost surely for all } k\geq 1 \text{ there exists } \alpha_k\geq 1 \text{ so that } u\in G(k,\alpha_k).
\eeq
This last property we be useful in many of the following proofs. A first consequence is the following 
\begin{remark}\label{re:grille}
An easy consequence of \eqref{eq:boite cle} is that almost surely, for any $i\in \Z$, 
the set  
$P_i=\{y \in \R : \forall s \in [2^{i-1},2^{i}], u(s,y) > 0 \}$ 
is (infinite and) not bounded. 
Indeed almost surely, $u\in G(k,\alpha_{k})$ for all $k\geq i$ 
so that there are at least $|J_{k-i}(k,\alpha_k)|$ points in $P_i$ separated by a distance at least $\sqrt{2^i}$. 
As $|J_{k-i}(k,\alpha_k)|$ goes to infinity when $k\to \8$, this yields the result. 
The same result holds of course for the set 
$N_i=\{y \in \R :  \forall s \in [2^{i-1},2^{i}], u(s,y) < 0 \}$.
\end{remark}

\bigskip
\textbf{Expected size of $u$ and its derivatives.} 
We prove here some quantitative estimates on the field $u$ and its derivatives.
\begin{lemma}\label{lem: upper bounds on u and del u}
	For any $\varepsilon > 0$, there exists $C > 0$ such that 
	\begin{align}
		\bE\left(\sup \left\{t^{\frac14 + \varepsilon} u(t,x) \, : \,  t\in ]0,1], x \in [-1,1] \right\}\right) \; &\le \; C ,
		\label{eq: bound on sup u}\\
		\bE\left(\sup \left\{t^{\frac34 + \varepsilon} \partial_x u(t,x) \, : \,  t\in ]0,1], x \in [-1,1] \right\}\right) \; &\le \; C ,
		\label{eq: bound on sup del x u}\\
		\bE\left(\sup \left\{t^{\frac54 + \varepsilon} \partial_t u(t,x) \, : \,  t\in ]0,1], x \in [-1,1] \right\}\right) \; &\le \; C. 
		\label{eq: bound on sup del t u}
	\end{align}
	As the field $u\stackrel{(law)}{=}-u$, similar estimates hold for the infimum instead of the supremum.
\end{lemma}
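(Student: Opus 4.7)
I would treat the three bounds uniformly by writing $f$ for one of $u, \partial_x u, \partial_t u$, with corresponding scaling exponent $\gamma \in \{1/4, 3/4, 5/4\}$. Formally differentiating the scaling identity \eqref{eq: scaling} in $x$ (respectively $t$) yields
$$
(f(t,x))_{t,x} \;\stackrel{(law)}{=}\; (\alpha^{\gamma} f(\alpha t, \alpha^{1/2} x))_{t,x}, \qquad \alpha > 0.
$$
The exponents $\gamma + \epsilon$ in the statement are precisely chosen to cancel the blow-up of $f$ at $t = 0$, so after a suitable rescaling the weights should become harmless.

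Concretely, I would decompose $(0,1] = \bigsqcup_{k \ge 0} I_k$ with $I_k = (2^{-k-1}, 2^{-k}]$, and apply the scaling above with $\alpha = 2^k$, which sends $I_k$ onto $[1/2,1]$ and $[-1,1]$ onto $[-2^{k/2}, 2^{k/2}]$. Since $t^{\gamma+\epsilon} \le 2^{-k(\gamma+\epsilon)}$ on $I_k$, this yields
$$
\sup_{t \in I_k,\, |x| \le 1} t^{\gamma+\epsilon}\, f(t,x) \;\stackrel{(law)}{\le}\; 2^{-k\epsilon} \sup_{s \in [1/2,1],\, |y| \le 2^{k/2}} f(s,y).
$$
The problem therefore reduces to the Gaussian-field estimate
$$
\bE \sup_{s \in [1/2,1],\, |y| \le R} f(s,y) \;\le\; C\sqrt{\log(R+2)}, \qquad R \ge 1.
$$

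For this, I would exploit the explicit covariance $\bE(u(t,x) u(s,y)) = P_{t+s}(x-y)$ from \eqref{eq: covariance}, and its suitable derivatives for $\partial_x u$ and $\partial_t u$. On the strip $s,t \in [1/2,1]$ these covariances are smooth, uniformly bounded, and stationary in space. Covering $[1/2,1] \times [-R,R]$ by $O(R)$ unit boxes, Dudley's entropy bound applied on each box gives $\bE \sup$ over a unit box $= O(1)$; Borell--TIS concentration for Gaussian suprema then upgrades the maximum over $O(R)$ such boxes to the $\sqrt{\log R}$ rate in expectation. Summing, one obtains
$$
\bE \sup_{(0,1] \times [-1,1]} t^{\gamma+\epsilon} f(t,x) \;\le\; C\sum_{k \ge 0} 2^{-k\epsilon} \sqrt{k+2} \;<\; \infty,
$$
and the corresponding bound for the infimum follows from the symmetry $u \stackrel{(law)}{=} -u$.

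The only delicate step is the chaining bound on the unit-time strip for the derivative fields, as one must verify a quantitative modulus of continuity from the covariances $-\partial_{x}^2 P_{t+s}$ and $\partial_t^2 P_{t+s}$. Away from $t + s = 0$ these are analytic functions of $(t,s,x-y)$ with bounded derivatives, so the required Hölder estimate is routine; I do not foresee any genuine obstacle, only careful bookkeeping to keep track of the correct power of $t$ in each scaling argument.
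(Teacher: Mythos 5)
Your proof is correct, but it takes a genuinely different route from the paper. You reduce the problem by a dyadic decomposition in time, using the exact scaling relation \eqref{eq: scaling} (and its differentiated version, which the paper itself records later as \eqref{eq: second scaling relation}) to map each slab $(2^{-k-1},2^{-k}]\times[-1,1]$ onto the fixed strip $[1/2,1]\times[-2^{k/2},2^{k/2}]$, where the covariance $P_{t+s}(x-y)$ and its derivatives are smooth and spatially stationary; you then get $\bE\sup \lesssim \sqrt{\log R}$ over a strip of width $R$ by Dudley on unit boxes plus Borell--TIS and a union bound, and the weight contributes the factor $2^{-k\varepsilon}$ that makes the series $\sum_k 2^{-k\varepsilon}\sqrt{k+2}$ converge. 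The paper instead applies Dudley's theorem once, directly to the weighted field $t^{\gamma+\varepsilon}f(t,x)$ on all of $]0,1]\times[-1,1]$: it computes the canonical metric explicitly (see \eqref{eq: explicit dudley distance}), observes that the whole neighbourhood $]0,c\,\delta^{1/\varepsilon}]\times[-1,1]$ of the singular edge $t=0$ sits inside a single $\delta$-ball because the weight kills the field there, and shows the metric is Lipschitz with constant $C\delta^{-1/(2\varepsilon)}$ on the complement, yielding a polynomial entropy bound and a convergent entropy integral. Both arguments are sound; yours is more modular (scaling, standard entropy bound on a fixed strip, Gaussian concentration) and makes the role of $\varepsilon$ as a geometric-decay parameter transparent, while the paper's is self-contained in a single chaining step at the cost of explicit covariance computations near $t=0$. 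The only points needing the "careful bookkeeping" you acknowledge are (i) replacing suprema by their positive parts (or adding the value at one point) when bounding $\bE\sup_{(0,1]}$ by the sum over slabs and when pulling out $t^{\gamma+\varepsilon}\le 2^{-k(\gamma+\varepsilon)}$, and (ii) verifying the Hölder modulus of the canonical metrics for $\partial_x u$ and $\partial_t u$ on the unit strip from $-\partial_{xx}P_{t+s}$ and $\partial_x^4 P_{t+s}$; both are routine, as you say.
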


\begin{remark}
We stress that the result is false if $\varepsilon =0$ as these supremum are infinite almost surely in this case. 
Indeed, let us consider for example the field $u$.  
First, for all $t>0$ and $x\in\R$, the variables $t^{1/4}u(t,x)$ are identically distributed joint Gaussian variables. 
Second, given any $n\ge 1$ and points $x_1,\dots ,x_n$ with $-1 \le x_1<\dots <x_n \le 1$, 
the Gaussian vector $(t^{1/2}u(t,x_1), \dots , t^{1/2}u(t,x_n))$ becomes uncorrelated as $t \to 0$.
From this, one concludes that $\sup_{t\in]0,1],x\in[-1,1]}t^{1/4}u(t,x) = + \infty$ almost surely. 
\end{remark}

\begin{proof}
Before starting the proof, we remind that we have already obtained the expression $\bE (u(t,x) u(s,y)) = P_{t+s}(x-y)$ in \eqref{eq: covariance}. 
Analogously, we derive
$$
	\partial_x u (t,x) = -\partial_x\int_\R  P_t (x-y) \dd B_y =- \int_\R P_t' (x-y) \dd B_y
$$ 
and thus
\begin{align}
	\bE (\partial_x u(t,x) \partial_x u(s,y))
	\; &= \; 
	\int_\R  P_t'(x-z)  P_s'(y-z) \dd z
	\; = \; 
	\partial_x \partial_y P_{t+s}(x-y) \nonumber\\
	\; &= \; 
	- \partial_{xx} P_{t+s}(x-y) ,
	\label{eq: covariance derivative}
\end{align}
as well as
$$
	\bE (\partial_t u(t,x) \partial_t u(s,y)) = \partial^4_{x} P_{t+s}(x-y).
$$

The Lemma follows from Dudley's theorem, see e.g.~\cite{ledoux talagrand}.
Let us show \eqref{eq: bound on sup u}.
We define a metric $d$ on $]0,1]\times[-1,1]$ by 
\begin{equation}\label{eq: dudley distance}
	d((t,x),(s,y)) \; = \; \left(\bE \left(t^{\frac14 + \varepsilon} u(t,x) - s^{\frac14 + \varepsilon} u(s,y)\right)^2 \right)^{\frac12}.
\end{equation}
Given $\delta > 0$, let $N(\delta)$ be the minimal number of balls of radius $\delta$ for the metric $d$ needed to cover $]0,1]\times[-1,1]$. 
Dudley's theorem asserts that 
\begin{equation}\label{eq: dudley bound}
	\bE\left(\sup \left\{t^{\frac14 + \varepsilon} u(t,x) \, : \,  t\in ]0,1], x \in [-1,1] \right\}\right) \; \le \; 24 \int_0^\infty (\ln N(\delta))^{\frac12} \dd \delta .
\end{equation}

Given $\delta > 0$, let us derive a bound on $N(\delta)$. 
From \eqref{eq: covariance} and \eqref{eq: dudley distance}, we compute 
\begin{equation}\label{eq: explicit dudley distance}
	d((t,x),(s,y))^2 
	\; = \;
	(4\pi)^{-\frac12}
	\left( 
	t^{2\varepsilon} + s^{2 \varepsilon} - 2 s^{\frac14 + \varepsilon}t^{\frac14 + \varepsilon}\left(\frac{s+t}2\right)^{-\frac12}\ed^{-\frac{(x-y)^2}{2(t+s)}}
	\right).
\end{equation}
Hence the bounds
\begin{equation}\label{eq: bound distance}
	d((t,x),(s,y)) 
	\; \le \;
	(4\pi)^{-\frac14} (t^{2\varepsilon} + s^{2 \varepsilon} )^{\frac12}  
	\; \le \; 
	(4\pi)^{-\frac14}\sqrt 2 .
\end{equation}
Therefore $N(\delta) = 1$ as soon as $\delta \ge (4\pi)^{-\frac14} \sqrt 2$.
Let us thus assume that $0 < \delta < (4\pi)^{-\frac14}\sqrt 2$. 
It follows from \eqref{eq: bound distance} that the set 
$$
	B_0 \; = \;  \left]0,c\,\delta^{\frac1\varepsilon}\right] \times [-1,1]
	\quad \text{with} \quad c \; = \; \pi^{\frac1{4\varepsilon}}
$$ 
is contained in single ball of radius $\delta$. 
On $(]0,1]\times[-1,1])\backslash B_0$, we will show that there exists $C>0$ such that 
\begin{equation}\label{eq: bound distance norm}
	d((t,x),(s,y)) \; \le \; C \delta^{-\frac1{2\varepsilon}} \left( |t-s| + |x-y| \right).
\end{equation}
This implies that there exists $C > 0$ such that $N(\delta) \le C \delta^{- 2 \left(1 + \frac1{2\varepsilon}\right)}$, 
hence that the integral in \eqref{eq: dudley bound} converges (to a value that depends on $\varepsilon$). 

Let us show \eqref{eq: bound distance norm}. By the triangle inequality,{ it holds that}
$$
	d((t,x),(s,y)) \; \le \; d((t,x),(t,y)) + d((t,y),(s,y)).
$$
First, from \eqref{eq: explicit dudley distance}, 
$$
	d((t,x),(t,y))^2
	\; = \;
	\pi^{-\frac12} t^{2\varepsilon} \left( 1 - \ed^{- \frac{(x-y)^2}{4t}} \right) 
	\; \le \; 
	C \frac{(x-y)^2}{t}
	\; \le \; 
	C \delta^{-\frac1\varepsilon} (x-y)^2
$$
where we have used the bounds $t\le 1$ and $1 - \ed^{-z}\le z$ for all $z\ge0$ to obtain the first inequality, 
and $t > c \delta^{\frac1\varepsilon}$ to get the second one. 
Next, from \eqref{eq: explicit dudley distance} again, 
\begin{align*}
	d((t,y),(s,y))^2
	\; &= \; 
	(4 \pi)^{-\frac12} t^{2 \varepsilon}
	\left(
	1 + \left(1 + \frac{s-t}{t} \right)^{2\varepsilon} - 2 \left(1 + \frac{s-t}{t} \right)^{\frac14 + \varepsilon} \left(1 + \frac{s-t}{2t}\right)^{-\frac12}
	\right) \\
	&=: \; t^{2\varepsilon}\varphi \left( \frac{s-t}{t} \right)
	\; \le \; 
	C \left( \frac{s-t}{t} \right)^2 
	\; \le \; 
	C \delta^{- \frac1\varepsilon} (t-s)^2
\end{align*}
where the first inequality follows from the fact that the distance $d$ is bounded by \eqref{eq: bound distance} and that $\varphi(0) = \varphi' (0) = 0$, 
and where the second one is obtained thanks to the bound $t > c \delta^{\frac1\varepsilon}$.

The proof of \eqref{eq: bound on sup del x u} is analogous and we only outline the main steps. 
This time, 
$$
	d((t,x),(s,y))^2 
	\; = \;
	(16\pi)^{-\frac12}
	\left( 
	t^{2\varepsilon} + s^{2 \varepsilon} - 2 s^{\frac34 + \varepsilon}t^{\frac34 + \varepsilon}\left(\frac{s+t}2\right)^{-\frac32}
	\left(1 - \frac{(x-y)^2}{t+s} \right)\ed^{-\frac{(x-y)^2}{2(t+s)}}
	\right).
$$
Since the function $z \mapsto (1 - z^2) \ed^{- z^2/2}$ is bounded, we obtain a bound analogous to \eqref{eq: bound distance}: 
$$
	d((t,x),(s,y)) 
	\; \le \;
	C (t^{2\varepsilon} + s^{2 \varepsilon} )^{\frac12}  
	\; \le \; 
	C .
$$
Hence, again, it is enough to show \eqref{eq: bound distance norm} for $t > c \delta^{\frac1\varepsilon}$ for some $c>0$, 
and the rest of the proof uses only completely similar computations. 

The proof of \eqref{eq: bound on sup del t u} is analogous.
\end{proof}

%%%%%%%%%%%%%%%%%%%%%%%%%%%%%%%%%%%%%%%%%%%%%%%%%%
%%%%%%%%%%%%%%%%%%%%%%%%%%%%%%%%%%%%%%%%%%%%%%%%%%
%%%%%%%%%%%%%%%%%%%%%%%%%%%%%%%%%%%%%%%%%%%%%%%%%%
%%%%%%%%%%%%%%%%%%%%%%%%%%%%%%%%%%%%%%%%%%%%%%%%%%

\section{Proof of Proposition~\ref{pro: X well defined} and Theorem~\ref{the: short time}}\label{sec: short time}
\begin{proof}[Proof of Proposition~\ref{pro: X well defined}]
Let us first show that, almost surely, there exists $\varepsilon > 0$ so that $(X_t)_{0 \le t\le \varepsilon}$ is defined as the fixed point of the map
$$
	\Phi \; :\;  \cC\left([0,\gep],[-1,1] \right)\to \cC\left([0,\gep],[-1,1] \right), 
	\quad
	f\mapsto  \Phi(f) = t\mapsto \int _0^t u(s,f_s) \dd s.
$$
First, if we choose $\gep$ small enough, $\Phi$ is well defined.
Indeed, thanks to Lemma \ref{lem: upper bounds on u and del u},
the time integral is a.s. convergent and moreover, taking for example $\delta = 1/10$,{ we find $C>0$ such that}, for all $f\in \cC\left([0,\gep],[-1,1] \right)$, 
$$
	\|\Phi(f)\|_{\8} \; \le \; C \int _0^\gep \frac{\dd s}{s^{1/4+\delta}}  \; \le \; 1
$$ 
if $\gep$ is chosen small enough.

Next, $\Phi$ is contracting if $\varepsilon$ is small enough. 
Indeed, there exists $C>0$ such that, for all $f,g \in \cC\left([0,\gep],[-1,1] \right)$,
$$
\ba
	\|f-g\|_{\8} &\;\le\; \int_0^{\gep} |u(s,f_s)-u(s,g_s)| \dd s\\
	&\;\le\; \|f-g\|_{\8} \int_0^{\gep} \sup_{x\in [-1,1]} |\partial_x u(s,x)| \dd s 
	\; \le \; C\int_0^{\gep} \frac{\dd s}{s^{3/4+\delta}} 
	\; \le \; 1
\ea
$$
if $\gep$ is chosen small enough.

It is thus almost surely possible to define $(X_t)_{0\leq t \leq \gep}$ as the unique fixed point of $\Phi$.
Clearly this process is continuous and satisfies \eqref{eq:X} for $0 \le t \le \varepsilon$.
Let us show that this process can be extended on $\R_+$.

We define $(X_t)_{t\in I}$ as the maximal solution for the Cauchy problem {$\partial_t X_t = u(t,X_t)$}
with the condition that, at time $t=\gep$, $X_t$ coincides with $X_\gep$ found above. 
We already know that $\inf I=0$ so that it remains to prove that $\sup I=+\8$. 
If $t_\star =\sup I<\8$ then, $(X_t)_{0<t < t_\star}$ explodes before time $t_\star$, i.e. $\lim_{t\to t_\star^-}|X_t|=+\8$. 
This is impossible thanks to Remark \ref{re:grille}. 
Indeed, let $i$ be the integer such that $2^{i-1}< t_\star \leq 2^i$ and choose $x\in P_i$ and $y\in N_i$ so that $x < X_{2^{i-1}} <y$. 
This implies that $X_s\in]x,y[$ for all $2^{i-1}\leq s < t_\star$ and this is a contradiction.
\end{proof}

\begin{proof}[Proof of Theorem~\ref{the: short time}]
We fix some $\theta_0>0$. For $0 < \theta \le \theta_0$ and $T > 0$, we decompose
$$
	\frac{X_{\theta T}}{T^{3/4}} 
	\; = \; 
	\frac1{T^{3/4}} \int_0^{\theta T} u (s,0) \dd s
	\; + \;  
	\frac1{T^{3/4}} \int_0^{\theta T} \big( u(s,X_s) - u (s,0) \big)\dd s .
$$
Thanks to the scaling relation~\eqref{eq: scaling}, 
$$
	\left(\frac1{T^{3/4}} \int_0^{\theta T} u (s,0) \dd s\right)_{0\le \theta \le \theta_0}
	\; = \; 
	\left(\int_0^\theta u (s,0) \dd s\right)_{0\leq \theta \le \theta_0}
	\quad \text{in law}. 
$$
Hence, it is enough to show that almost surely 
$$
	\sup_{0 < \theta \le \theta_0} \left|\frac1{T^{3/4}} \int_0^{\theta T}  u(s,X_s) - u (s,0)\ \dd s \right|\; \to \; 0 \quad \text{as} \quad T \to 0. 
$$

Let $\delta = 1/10$.
Thanks to Lemma \ref{lem: upper bounds on u and del u}, almost surely, there exists a constant $C>0$ so that for all $0 < t \le 1$ and all $x\in [-1,1]$,  
we have the bounds $|t^{\frac14 + \delta} u(t,x)|<C$ and $|t^{\frac34 + \delta} \partial_x u(t,x)|<C$.  
From now on, by continuity, we take $T$ small enough so that $\theta_0 T \leq 1$ and $\sup_{s\leq \theta_0 T}|X_s|\leq 1$.
It thus holds that for $0\leq t \leq \theta_0T$,
$$
	|X_t| \; \leq \; |\int_0^t u(s,X_s)\ \dd s| 
	\; \le \; \int_0^t \sup_{x\in [-1,1]} |u(s,x)|\ \dd s
	\; \le \;  C\int_0^t \frac{\dd s}{s^{1/4+\delta}} 
	\; \le \; 2C t^{3/4-\delta} . 
$$ 
Next, for all $0 \le \theta \le \theta_0$, 
$$
\ba
	\left|\frac1{T^{3/4}}\int_0^{\theta T} u(s,X_s)-u(s,0) \dd s \right|
	& \; \le \; \frac1{T^{3/4}} \int_0^{\theta T} |u(s,X_s)-u(s,0)| \dd s \\
	& \; \le \; \frac1{T^{3/4}} \int_0^{\theta T} \sup_{x\in [-1,1]} |\partial_x u(s,x)| |X_s| \dd s \\
	& \; \le \; \frac{2C^2}{T^{3/4}} \int_0^{\theta T} \frac{1}{s^{3/4 + \delta}}  s^{3/4-\delta} \dd s
	\; \le \; 4C^2 \frac{(\theta_0 T)^{1 - 2 \delta}}{T^{3/4}}
\ea
$$
and the last bound (uniform on $0\leq \theta \leq \theta_0$) converges to $0$ as $T \to 0$. 
\end{proof}

%%%%%%%%%%%%%%%%%%%%%%%%%%%%%%%%%%%%%%%%%%%%%%%%%%
%%%%%%%%%%%%%%%%%%%%%%%%%%%%%%%%%%%%%%%%%%%%%%%%%%
%%%%%%%%%%%%%%%%%%%%%%%%%%%%%%%%%%%%%%%%%%%%%%%%%%
%%%%%%%%%%%%%%%%%%%%%%%%%%%%%%%%%%%%%%%%%%%%%%%%%%

\section{Proof of Lemma~\ref{lem: definition r} and Proposition~\ref{pro: Z process}}\label{sec: zeros}

We start with a Lemma, that guarantees that the zeros of $u$ are almost surely never degenerate, 
i.e. either $\partial_x u$ or $\partial_t u$ is non-zero whenever $u$ vanishes.
This will enable us to invoke the implicit function theorem in several places. 
Moreover, we show also that there are only countably many isolated points where $\partial_x u$ vanishes, corresponding to the tops of the blue curves on Fig.~\ref{fig: 0LRX}.
This is the key ingredient to show item 3 in Proposition~\ref{pro: Z process}.
\begin{lemma}\label{lem: proba zero events}
	The field $u$ satisfies 
	\begin{enumerate}
	\item $\bP\big(\exists (t,x)\in \R_+^* \times \R : u(t,x) = \partial_t u (t,x) = \partial_x u (t,x) = 0\big) \; = \; 0.$
		\item Almost surely, on any compact set $K \subset \R_+^* \times \R$, the set of points where $u(t,x)=\partial_x u (t,x) = 0$ is finite.
	\end{enumerate}
\end{lemma}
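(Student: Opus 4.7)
The plan is to prove (1) by a codimension argument of Bulinskaya type, and then derive (2) from (1) via the implicit function theorem.

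\emph{Reformulation and reduction to a compact.} Since $u$ solves the heat equation, $\partial_t u = \partial_{xx} u$, so (1) is equivalent to
$$
\bP\bigl(\exists (t,x)\in\R_+^*\times\R : u(t,x) = \partial_x u(t,x) = \partial_{xx} u(t,x) = 0\bigr) = 0.
$$
Writing $\R_+^*\times\R$ as a countable union of dyadic space--time boxes and invoking the scaling relation~\eqref{eq: scaling} (which extends by differentiation to $\partial_x u$ and $\partial_{xx} u$ with weights $\alpha^{3/4}$ and $\alpha^{5/4}$), it suffices to prove that the smooth Gaussian map $F(t,x) = (u, \partial_x u, \partial_{xx} u)(t,x)$ almost surely avoids $0$ on the compact $K = [1,2]\times[-M,M]$.

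\emph{Covering argument.} Partition $K$ into $O(\epsilon^{-2})$ boxes of side $\epsilon$. If $F$ vanishes inside some box, then at its center $(t_0, x_0)$ each of $|u|$, $|\partial_x u|$, $|\partial_{xx} u|$ is bounded by $2\epsilon S$, where $S$ is the supremum over $K$ of the relevant third-order partial derivatives of $u$. A direct adaptation of Lemma~\ref{lem: upper bounds on u and del u} (Dudley's theorem applied to those higher-order Gaussian fields on $K$, where time is bounded away from $0$) gives $\bE(S)<\infty$. Truncating on $\{S\le\Lambda\}$ reduces matters to estimating
$$
\bP\bigl(|u(t_0,x_0)|,\,|\partial_x u(t_0,x_0)|,\,|\partial_{xx} u(t_0,x_0)| \le 2\Lambda\epsilon\bigr).
$$
Differentiating~\eqref{eq: covariance}, the covariance matrix of the Gaussian triple $(u,\partial_x u,\partial_{xx} u)(t_0,x_0)$ is expressible purely in terms of $P_{2t_0}(0)$ and its even derivatives at $0$; a short calculation shows its determinant is strictly positive, uniformly in $(t_0,x_0) \in K$. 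Hence the density is uniformly bounded and the above probability is $O(\epsilon^3)$. Summing over the $O(\epsilon^{-2})$ boxes and sending $\epsilon\to 0$ then $\Lambda\to\infty$ proves (1).

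\emph{From (1) to (2).} Consider $G(t,x) = (u,\partial_x u)(t,x)$. At a zero of $G$ we have $\partial_x u = 0$ and, by the heat equation, $\partial_t u = \partial_{xx} u$, so
$$
\det DG \;=\; \partial_t u\cdot \partial_{xx} u - \partial_x u\cdot \partial_{tx} u \;=\; (\partial_{xx} u)^2.
$$
Part (1) forces $\partial_{xx} u = \partial_t u \neq 0$ almost surely at such a zero, so by the implicit function theorem the zeros of $G$ are isolated, hence $G^{-1}(0)\cap K$ is finite on any compact $K \subset \R_+^*\times\R$.

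\emph{Main obstacle.} The technical crux is verifying that the $3\times 3$ covariance matrix of $(u,\partial_x u,\partial_{xx} u)$ is non-degenerate uniformly on $K$; this is a direct Gaussian computation but must be carried out carefully, and it has to be combined with moment bounds on the appropriate third partial derivatives so that the discretization error in the covering step can be absorbed.
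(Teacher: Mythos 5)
Your proposal is correct, and for item (1) it is essentially the paper's argument: the paper works with $\varphi=(u,\partial_t u,\partial_x u)$ (the same triple as your $F$, via the heat equation), reduces to the compact $[1,2]\times[-1,1]$ by scaling and translation invariance, obtains the small-ball bound $\bP(|\varphi(t,x)|<\varepsilon)\le C\varepsilon^3$ from non-degeneracy of the Gaussian triple (seen there as the Gram matrix of $P_2,\partial_tP_2,\partial_xP_2$ in $L^2(\R)$, with uniformity in $(t,x)$ handled by scaling everything to the single point $(2,0)$ rather than by your continuity-on-$K$ argument), and then runs the same $\varepsilon$-grid covering with a truncation $\{\|\varphi'\|_\infty<N\}$ playing the role of your $\{S\le\Lambda\}$; your deferred "main obstacle" is indeed only the short Gram/Cauchy--Schwarz computation the paper also leaves implicit, and note that you only need $S<\infty$ almost surely (automatic from smoothness of $u$ on $\R_+^*\times\R$), not $\bE(S)<\infty$, and that $S$ must control the gradients of all three components of $F$, i.e.\ derivatives of $u$ up to $\partial^3_{xxx}u$ and $\partial_t\partial^2_{xx}u$. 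For item (2) you take a genuinely different, and arguably cleaner, route: you apply the inverse function theorem to the map $G=(u,\partial_x u)$, whose Jacobian at a common zero equals $\partial_t u\,\partial_{xx}u=(\partial_{xx}u)^2>0$ by item (1) and the heat equation, so zeros of $G$ are isolated and hence finite in number on a compact (since $G^{-1}(0)\cap K$ is compact). The paper instead parametrizes the zero set locally as $t=S(x)$ via the implicit function theorem (using $\partial_t u\ne 0$) and computes $S'(y)=0$, $S''(y)=-1<0$ to conclude isolation; this is slightly longer, but the function $S$ and its quadratic behaviour are reused later (in the proof of items 1 of Proposition~\ref{pro: Z process} and in Lemma~\ref{lem: point3}), which is what the paper's choice buys.
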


\begin{remark}
As our proof shows, the first item holds actually for any smooth field $u$ such that $(u,\partial_t u , \partial_x u)$ has a locally bounded density around $0$. 
\end{remark}

\begin{proof}
	For the first item, let us consider the field $\varphi$ defined by
	\beq
		\varphi(t,x)\; = \; \big( u(t,x) , \partial_t u (t,x) , \partial_x u (t,x) \big), \quad t>0,x\in \R.
	\eeq
	From the scaling relation \eqref{eq: scaling}, we obtain also
	\beq\label{eq: second scaling relation}
		\big(\varphi(t,x)\big)_{t,x}
		\stackrel{(law)}{=}
		\big( \alpha^{1/4} u(\alpha t, \alpha^{1/2}x) ,  \alpha^{5/4} \partial_t u (\alpha t,\alpha^{1/2}x) ,  \alpha^{3/4}\partial_x u (\alpha t,\alpha^{1/2}x) \big)_{t,x}.
	\eeq
	By sigma additivity, together with the scaling relation~\eqref{eq: second scaling relation} and the space translation invariance
	we find that it suffices to show that 
	$$
		\bP \big(\exists (t,x) \in [1,2]\times [-1,1] : \varphi (t,x) = 0 \big)  \; = \;  0,
	$$
	
	To prove this, let us first show that there exists some $C > 0$ such that, for any $(t,x) \in [1,2]\times [-1,1]$ and for any $\varepsilon > 0$, 
	\begin{equation}\label{eq: cubic bound}
		\bP(|\varphi (t,x)| < \varepsilon) \; < \; C \varepsilon^3
	\end{equation}
	where $|\cdot|$ denotes the Euclidian norm.
	First, since $t \le 2$, {by the scaling relation \eqref{eq: second scaling relation} and translation invariance, }for all $(t,x) \in [1,2]\times [-1,1]$
	\begin{align*}
		\bP(|\varphi (t,x)| < \varepsilon) \leq \bP (|\varphi(2,0)|< \varepsilon).
	\end{align*}
	Therefore, to show \eqref{eq: cubic bound}, since $\varphi(2,0)$ is Gaussian, it suffices to show that its covariance is non-degenerate, i.e.\@ invertible. 
	Since 
	$$
		\bE( \varphi_i \varphi_j ) \; = \;  \int_\R \partial^i P_2 (z) \partial^j P_2 (z) \dd z 
	$$
	
	for $1 \le i,j\le 3$,
	with the notation $(\partial^1,\partial^2,\partial^3) = (1, \partial_t , \partial_x)$, 
	{and since $P_2(\cdot)$, $\partial_t P_2(\cdot)$ and $\partial_x P_2(\cdot)$ are linearly independent as elements of $L^2(\R)$, 
	the covariance is indeed non-degenerate. 
}
	
	Next, because $\varphi$ is smooth, it holds that 
	\begin{multline}\label{eq: bound with N}
		\{\exists (t,x)\in[1,2]\times[-1,1]:\varphi (t,x) = 0\}
		\; = \; \\
		\bigcup_{N \in \N^*}
		\left\{
		\exists(t,x)\in[1,2]\times [-1,1] : \varphi (t,x) = 0 \text{ and }  \|\varphi' \|_{\8} < N
		\right\}
	\end{multline}
	with $\|f\|_{\8} = \max_{(t,x) \in [1,2]\times [-1,1]}\|f(t,x)\|$ for any continuous function $f$ on $[1,2]\times [-1,1]$
	with values in linear application from $\R^2$ to $\R^3$, and where $\|\cdot\|$ is the operator norm when both spaces are endowed with the euclidian norms.
	Thus it suffices to show that, for any $N \in \N^*$, the probability of the corresponding set in the union in the right hand side of \eqref{eq: bound with N} is zero. 
	Let $N \in \N^*$, let $\varepsilon > 0$ and let us define the points 
	$$
		(t_i,x_j) \; = \; \left(1 + i\varepsilon, j\varepsilon \right), 
		\quad i \in \N, \; i < 1/\varepsilon,
		\quad j \in \Z, \; |j| < 1/\varepsilon.
	$$
	The number of such points is bounded by $2 / \varepsilon^2$.
	Now, under the condition $\|\varphi' \|_{\8} < N$,
	if $\varphi (t,x) = 0$ for some $(t,x)\in[1,2]\times [-1,1]$, then $|\varphi (t_i,x_j)| \le \sqrt 2 \varepsilon N$ for one of the points $(t_i,x_j)$ at least.
	Hence, using \eqref{eq: cubic bound}, we obtain
	\begin{align*}
		\bP \left(\exists(t,x)\in[1,2]\times [-1,1] : \varphi (t,x) = 0 \text{ and } \|\varphi' \|_{\8} < N\right)
		\; &\le \; \sum_{i,j} \bP \left(|\varphi (t_i,x_j)| \le \sqrt 2 \varepsilon N\right)\\ 
		\; &\le \; \frac{2C}{\varepsilon^2} \left(\sqrt 2 \varepsilon N\right)^3.
	\end{align*}
	Since $\varepsilon$ may be taken arbitrarily small for given $N\in\N^*$, the left hand side vanishes for any $N\in \N^*$.

	We turn to the second item. 
	As $K$ is compact it is enough to prove that the set of $(s,y)\in K$, so that $u(s,y) = \partial_x u (s,y) = 0$, 
	has only isolated points. Using item $1$, we may assume that almost surely for all points of this set $\partial_t u (s,y) \ne 0$. 
	We consider one of these points,
 	and using the implicit function theorem, we know that there exists a real function $S$ defined in a neighborhood of $y$ 
	so that the set of zeros of the field $u$ coincides with the graph of this function on a neighborhood of $(s,y)$.
	The function $S$ satisfies
	\begin{align*}
		S'(z) \; &= \; - \frac{\partial_x u (S(z),z)}{\partial_t u (S(z),z)} , \\
		S''(z) \; &= \; - 1 + S'(z) 
		\left( \frac{\partial_x u(S(z),z) \partial_{tt}u(S(z),z)}{(\partial_t u(S(z),z) )^2} - 2\frac{\partial_{tx}u(S(z),z)}{\partial_t u(S(z),z)} \right)	
	\end{align*}
	for $z$ in a neighborhood of $y$. Therefore, in a neighborhood of $y$, $\partial_x u (S(z),z) = 0$ if and only if $S'(z) = 0$ and, 
	since $S'(y) = 0$, we have $S''(z) < 0$, and thus also $S'(z) \ne 0$ for $z \ne y$.  
\end{proof}

We have now all ingredients for the 
\begin{proof}[Proof of Lemma \ref{lem: definition r}]
By Lemma~\ref{lem: proba zero events}, 
one may assume that almost surely on every point $(s,y)$ such that $u(s,y)=0$, either $\partial_t u (s,y) \ne 0$ or $\partial_x u (s,y) \ne 0$.
The main observation is that, if $(s,y)$ is such that $u(s,y) =0$ and  $\partial_x u (s,y) = 0$, then there exists $\varepsilon > 0$ such that 
\begin{equation}\label{eq: no zero below}
	u(s',y') \ne 0 \text{ for all } (s',y') \in ]s,s+\varepsilon[\times]y-\varepsilon,y+\varepsilon[.
\end{equation}
Indeed, as $\partial_t u (s,y) = \partial_{xx} u (s,y) \ne 0$, we may assume that $\partial_t u (s,y) > 0$ (the other case being analogous). 
By continuity, there exists $\varepsilon > 0$ such that $\partial_t u(s',y') > 0$ for all $(s',y') \in ]s,s+\varepsilon[\times]y-\varepsilon,y+\varepsilon[$, 
and $u(s,y')\ge 0$ for all $y' \in ]y-\varepsilon,y+\varepsilon[$. 
Therefore $u(s',y')>u(s,y')\ge 0$ or all $(s',y') \in ]s,s+\varepsilon[\times]y-\varepsilon,y+\varepsilon[$, which shows the claim.
Let now $t>0$ and $x \in \cZ_t^{\mathrm s} \cup \cZ_t^{\mathrm u}$. 
We consider the set 
$$
	\cF\; = \; \left\{
	\ba
	& 0\leq  s_0 < t : \textrm{there exists a function }\ r_{(t,x)}:\; ]s_0,t]\to \R, \textrm{ and} \\
	& \textrm{ a neighbourhood } \cV\ \textrm{ of } (t,x) \textrm{ containing the graph of } r_{(t,x)}, \\ 
	&\textrm{ such that} \textrm{ for all }(s,y)\in \cV,\ u(s,y)=0 \Leftrightarrow y= r_{(t,x)}(s) 
	\ea \right\}.
$$
Since $\partial_x u(t,x) \ne 0$, the implicit function theorem guarantees that $\cF \ne \emptyset$. Let 
$$
	s_{min} \; = \; \inf \cF
$$
and let us prove by contradiction that $s_{min} = 0$. Assume that $s_{min}>0$. 

Remind the definition of $G(k,\alpha_k)$ in \eqref{eq: def G} and also property \eqref{eq:boite cle}. 
We choose $k$ large enough so that $t_0(k)=2^k\geq t$ and $|x|\leq \frac{K}{3} \alpha_k \sqrt{t_0(k)}$. 
Since $u\in G(k,\alpha_k)$ and since the graph of $ r_{(t,x)} $ does not intersect $\{u>0\}\cup \{u<0\}$, 
we obtain that for all $i \geq 0$, $|r_{(t,x)}(t_i(k))-r_{(t,x)}(t_{i+1}(k))|\leq 2\ell_i(k)$. 
Hence, by definition of $K$ in \eqref{eq:defn0}, for all $s_{min}\leq s \leq t$, $|r_{(t,x)} (s)|\leq  \frac{2K}{3} \alpha_k \sqrt{t_0(k)}$, 
and the set $\{(s,r_x(s)):s_{min}<s\le t\}$ is bounded. By compactness, there exists $y\in\R$ such that $(s_{min},y)$ lies in its closure. 
Therefore, by continuity, $u(s_{min},y)=0$ and, by the implicit function theorem again, $\partial_x u (s_{min},y) = 0$ as otherwise one should have $\inf\cF < s_{min}$. 
We reach a contradiction with \eqref{eq: no zero below}.

Let us next show that $r_{(t,x)}$ is continuous in $0$. This is a consequence of the fact that $r_{(t,x)}$ satisfies Cauchy property as $s$ goes to $0$. 
Indeed, with the same argument as above, for all $j$ large enough and $0<s, s'<t_j(k)$,
$$
	|r_{(t,x)}(s) - r_{(t,x)}(s')| \; \le \; \sum_{n\geq j} 2 \ell_n(k),
$$
and this last sum goes to zero as $j\to+\8$.

Finally, that stable zeros remain stable, and unstable ones remain unstable, follows from the fact that $\partial_x u (s,r_{(t,x)}(s))\ne 0$ for all $s\in ]0,t]$, 
as the above argument shows. 
The expression \eqref{eq:deriv r} follows from the implicit function theorem. 
\end{proof}

\begin{proof}[Proof of Proposition~\ref{pro: Z process}: 
existence of the processes $L$ and $R$ satisfying \eqref{eq: def Lt Rt} and \eqref{eq: alternative stable unstable}]
Let us first show that there exist processes $L$ and $R$ satisfying \eqref{eq: def Lt Rt} almost surely for any $t>0$. 
To fix the ideas, let us deal with $L$. 
As, almost surely, $u(t,\cdot)$ is analytic for any $t>0$, $\cZ_t$ has no accumulation points, 
and it is enough to prove that the set $\{x \in \cZ_t^{\mathrm s} \cup \cZ_t^{\mathrm u} : r_{(t,x)}(0) < 0\}$ is non-empty and bounded above.
Let us fix $t>0$ and show that, almost surely, for any $t' \in ]0,t]$, 
$\{x \in \cZ_{t'}^{\mathrm s} \cup \cZ_{t'}^{\mathrm u} : r_{(t',x)}(0) < 0\}$ is non-empty and bounded above.

We choose $k$ large enough so that $2^{k}\geq t$. Using Remark \ref{re:grille}, almost surely, 
$u(t',\cdot)$ changes sign infinitely many often on $]-\8,-K \alpha_k\ \sqrt{2^{k}}[$. 
As $u$ is continuous, each interval where $u$ changes sign intersects  $\cZ_{t'}$. 
One can actually say more as, from the first item in Lemma~\ref{lem: proba zero events}, 
we may assume that $\partial_{xx} u = \partial_t u \ne 0$ whenever $u = \partial_x u  = 0$ and thus, 
for all $y\in \cZ_{t'}^{\mathrm n}$, the function $u(t,\cdot)$ vanishes but does not change sign in a neighborhood of $y$. 
From this one {deduces} that each interval where $u$ changes sign intersects $\cZ_{t'}^{\mathrm s} \cup \cZ_{t'}^{\mathrm u}$ 
(we will use repeatedly this argument in the following).

Thus there exists $x\in   ]-\8,-K \alpha_k\ \sqrt{2^{k}}[\cap (\cZ_{t'}^{\mathrm s} \cup \cZ_{t'}^{\mathrm u})$. 
Arguing as in the proof of Lemma~\ref{lem: definition r}, 
we obtain that for all $0<s\leq t'$, $r_{t',x}(s)\leq -\frac{2K}3 \alpha_k\sqrt{2^{k}}$ and in particular $r_{t',x}(0)<0$. 
This implies that $\{x \in \cZ_{t'}^{\mathrm s} \cup \cZ_{t'}^{\mathrm u} : r_{(t',x)}(0) < 0\}$ is non-empty.
Moreover it is also bounded above as, with the same argument, for $x\in \cZ_{t'}^{\mathrm s} \cup \cZ_{t'}^{\mathrm u} $ such that $x\geq K \alpha_k \ \sqrt{2^{k}}$, 
it holds that $r_{t',x}(s)\geq \frac {2K}{3}\alpha_k\sqrt{2^{k}}>0$.

Second, let us show \eqref{eq: alternative stable unstable}. 
For this, let us first prove that the probability of the event
\begin{equation}\label{eq: the set W of proba 0}
	\cW \; = \; \{ \exists (t,x)\in \R_+^* \times \R : r_{(t,x)}(0) = 0\}
\end{equation}
vanishes. Let us decompose this event as
$$
	\cW \; = \; \bigcup_{t>0} \cW_t(0) \quad \text{with} \quad \cW_t(y) \; = \; \{\exists x \in \R : r_{(t,x)}(0) = y\}, \; y \in \R.
$$
Since, by Lemma \ref{lem: definition r}, the events $\cW_t(0)$ increase as $t$ decreases, it is enough to show that $\bP(\cW_t(0)) = 0$ for any $t>0$. 
Let $t>0$. As argued above, the set $\cZ_t^{\mathrm s} \cup  \cZ_t^{\mathrm u}$ is almost surely unbounded above and below, and countable.  
Let us denote its elements by $(z_k)_{k \in \Z}$, with $z_k < z_{k+1}$ for all $k\in \Z$ and $z_0 = \min (\cZ_t^{\mathrm s} \cup  \cZ_t^{\mathrm u})\cap \R_+$.
Therefore, given $y\in\R$, it holds that $\bP(\cW_t(y))>0$ if and only if $\bP(r_{(t,z_k)}(0) = y)>0$ for some $k\in\Z$. 
Since the atoms of a random variable are at most countable, the set of $y\in\R$ such that $\bP(\cW_t(y))>0$ is at most countable. 
As $\bP(\cW_t(y))$ is constant in $y\in\R$ by translation invariance, we deduce that $\bP(\cW_t(y)) = 0$ for all $y\in \R$. 

{On $\cW^c$,} let us assume by contradiction that there exists some $t>0$ such that $L_t,R_t \in \cZ_t^{\mathrm s}$
(one rules out analogously the case $L_t,R_t \in \cZ_t^{\mathrm u}$).
Since $u(t,x)<0$ for $x>L_t$ in a neighborhood of $L_t$ and since $u(t,x)>0$ for $x<R_t$ in a neighborhood of $R_t$, 
we find that there exists $x\in Z_t^{\mathrm u} \cap ]L_t,R_t[$.
By \eqref{eq: def Lt Rt}, we would have $r_{(t,x)}(0)=0$, but this is impossible if $\cW^c$ is realized. 
\end{proof}

\begin{proof}[Proof of item 1 in Proposition~\ref{pro: Z process}]
First, let us show that $Z$ is continuous in $t=0$. 
This follows from the fact that $L$ and $R$ are continuous in $t=0$. 
To fix the ideas, let us show this for $L$. 
We actually prove a bit more: For all $\gep >0$, almost surely if $t>0$ is small enough, $|L_t| \le  t^{1/2-\gep}$.
Let $\varepsilon > 0$.
As, almost surely, $u\in G(0,\alpha_{0})$ and $r_{t,L_t}(0)<0$, arguing as in the proof of Lemma \ref{lem: definition r}, 
we obtain that there exists $C > 0$ such that, for all $0<t\leq 1$,
$$
	L_t 
	\; \le \;  
	\sum_{n\geq \lfloor -\log_2 t \rfloor} 2 \ell_n(0 ,\alpha_0)
	\; = \; 
	\sum_{n\geq \lfloor -\log_2 t \rfloor} 2 (\alpha_0+n^2) \sqrt{2^{-n}}
	\; \le \; 
	C (\log_2 t)^2 t^{1/2}.
$$
Thus, for $t>0$ small enough, the upper bound $L_t \le t^{1/2 - \varepsilon}$ holds.  
Moreover, the above bound implies that for $t>0$ small enough, 
$$
	t^{1/2-\gep}-\sum_{n\geq \lfloor -\log_2 t \rfloor} 2 \ell_n(0 ,\alpha_0) 
	\; \geq \;  
	2  \ell_{\lfloor -\log_2 t \rfloor}(0 ,\alpha_0).
$$
Therefore, the function $u(t,\cdot)$ changes sign in $[-t^{1/2-\gep},-\sum_{n\geq \lfloor -\log_2 t \rfloor } 2 \ell_n(0,\alpha_0)[$ 
so that this interval intersects $\cZ_t$. 
Using the same argument as in the proof of Lemma \ref{lem: definition r}, 
one can even says that this interval intersects $\cZ_t^{\mathrm s}\cup \cZ_t^{\mathrm u}$ and we consider some $x$ in this intersection. 
As $u\in G(0,\alpha_{0})$, $r_{t,x}(0)\leq x+\sum_{n\geq \lfloor -\log_2 t \rfloor} 2 \ell_n(0) <0$ and this implies that $L_t\geq x \geq -t^{1/2-\gep}$.

Second, let $t>0$ and let us prove that $Z$ is càdlàg at $t$. 
Because $L_t,R_t \in \cZ_t^{\mathrm s} \cup \cZ_t^{\mathrm u}$, the implicit function theorem implies that
there exist $\varepsilon > 0$ as well as $x_L,x_R \in \R$ so that 
$$
	L_t \; = \; r_{(t+\varepsilon,x_L)} (t), \qquad 
	R_t \; = \; r_{(t+\varepsilon,x_R)} (t).
$$
By definition of $L_t$ and $R_t$, it holds that $]L_{t},R_{t}[ \cap (\cZ_t^{\mathrm{s}} \cup \cZ_t^{\mathrm{u}}) = \emptyset$ 
so that the only zeros that could be in $]L_{t},R_{t}[$ are neutral, and there are only a finite number of them since $\cZ_t$ has no accumulation point. 
We call them $z_i$, $i=1,\cdots,n$ (of course $n$ can be $0$ and, even if we did not need to prove it for our purposes, we believe that $n$ is at most $1$). 
We claim that, for $\varepsilon > 0$ small enough, there is no zero of $u$ in the set 
$$
	\bigcup_{t<s\leq t+\varepsilon}]r_{(t+\varepsilon,x_L)} (s) , r_{(t+\varepsilon,x_R)} (s)[
$$
Indeed, otherwise, as $u$ is continuous there would be a sequence of zeros in this set converging to some $z_i$, and this is impossible due to \eqref{eq: no zero below}, 
or to $L_t$ or $R_t$ and this is also impossible thanks to the implicit function theorem as both points are in $\cZ_t^{\mathrm{s}} \cup \cZ_t^{\mathrm{u}}$. 
This implies that $Z_s = r_{(t+\varepsilon,x_L)} (s)$ for all $s \in [t,t+\varepsilon]$ 
or $Z_s = r_{(t+\varepsilon,x_R)} (s)$ for all $s \in [t,t+\varepsilon]$, and this proves thus that $Z$ is right continuous at $t$. 

If $n\ge 1$ and if $i \in \{1,\dots , n\}$,
let $S^i$ be the function defined as in the proof of the second item of Lemma~\ref{lem: proba zero events}, here in the neighborhood of $z_i$.
From the properties of $S^i$, one deduces that,
for $\varepsilon > 0$ small enough, there exist two continuous functions $x^i_1$ and $x^i_2$, 
such that for all $t-\varepsilon \leq s < t$, $x^i_{1/2}(s)$ is in $\cZ_s^{\mathrm{s}} \cup \cZ_s^{\mathrm{u}}$, 
$x^i_1(s)<x <x^i_2(s)$ and the graphs of $x^i_{1/2}$ coincide with the zeros of $u$ in a neighborhood of $(t,z^i)$. 

Finally, we find that for all $t-\varepsilon \leq s < t$, 
\begin{multline*}
[r_{(t+\varepsilon,x_L)} (s), r_{(t+\varepsilon,x_R)}]\cap (\cZ_s^{\mathrm{s}} \cup \cZ_s^{\mathrm{u}}) \\
\; = \;  \{x^i_j(s),\ i=1,\dots,n ;\ j=1,2\}\cup \{r_{(t+\varepsilon,x_L)} (s), r_{(t+\varepsilon,x_R)}(s)\}
\end{multline*}
with the convention that the first set in the union in the right hand side is empty if $n=0$.
This implies that $Z$ coincides on$[t-\varepsilon,t[$ with one of these $(n+2)$ functions and thus that it is left continuous at $t$.
\end{proof}

\begin{proof}[Proof of item 2 in Proposition~\ref{pro: Z process}]
Suppose first that $Z_{t^-}\in\cZ_t^{\mathrm n}$. Using \eqref{eq: no zero below}, 
there exists $\varepsilon>0$ so that there is no zero in $]t,t+\varepsilon[\times]Z_{t^-}-\varepsilon,Z_{t^-}+\varepsilon[$. This implies that $Z$ is discontinuous at $t$. 
Suppose next that $Z_{t^-} \in \cZ_t^{\mathrm s} \cup \cZ_t^{\mathrm u}$, and thus by continuity that $Z_{t^-}\in\cZ_t^{\mathrm s}$. 
Without loss of generality, we may assume that $r_{({t,}Z_{t^-})}(0)<0$. 
First, if $z\in \cZ_t^{\mathrm s}$ satisfies $z < Z_{t^-}$, then $r_{(t,z)}(0) < r_{(t,Z_{t^-})}<0$ and thus $z \ne Z_t$. 
Second, if $z\in \cZ_t^{\mathrm s}$ satisfies $z > Z_{t^-}$, then there exists $z_0 \in ]Z_{t^-},z[\cap \cZ^{\mathrm u}_t$ and, 
by continuity, there exists $s\in]0,t[$ such that $Z_s < r_{(t,z_0)}(s)$. 
Therefore, $r_{(t,z_0)}(0) > 0$ and since $r_{(t,z)}(0) > r_{(t,z_0)}(0)$, this implies also $z\ne Z_t$. 
We conclude that $Z_t = Z_{t^-}$.
\end{proof}

\begin{proof}[Proof of item 3 in Proposition~\ref{pro: Z process}]
This follows from the second item in Lemma~\ref{lem: proba zero events} and the fact that $Z$ is discontinuous at $t$ if and only if $Z_{t-} \in \mathcal Z^{\mathrm n}_t$. 
\end{proof}

\begin{proof}[Proof of item 4 in Proposition~\ref{pro: Z process}]
In this proof, it is convenient to write $Z$ and $r$ as function of the environment. 
We fix $T>0$ and define $(\tilde Z_\theta^T)_{\theta\ge 0} = (T^{-1/2}Z_{\theta T})_{\theta\ge 0}$. 
Given an environment $u$, we also define $(u_T(t,x))_{t,x} = (T^{1/4} u (Tt,T^{1/2}x))_{t,x}$. 
Our goal is to prove that $\tilde Z^T (u) = Z(u_T)$. Hence, since $u$ and $u_T$ have the same law, this will imply our claim. 
Let $\theta>0$ and observe that
\begin{enumerate}
\item a real $x$ belongs to $\cZ_{\theta}(u_T)$ if and only if $T^{1/2}x \in \cZ_{\theta T}(u)$,
\item in this case both zeros are of the same type and, if moreover $x$ is not neutral, then for all $0\leq s \leq \theta$
$$
	 r_{(\theta,x)}(u_T)(s) \; = \; T^{-1/2} r_{(\theta T, T^{1/2}x)}(u)(sT). %Ajouter\ explication
$$
To prove this last point we observe that the function $\phi:s\to T^{-1/2} r_{(\theta T, T^{1/2}x)}(u)(sT)$ is continuous, satisfies $\phi(\theta)=x$ and 
$u_T(s,\phi(s))=0$ for all $0<s\leq \theta$. This is enough to conclude as, by definition, $r_{(\theta,x)}(u_T)$ is the only function to have these properties.
\end{enumerate}
By definition of the process $Z$, these two points imply that $\tilde Z^T (u) = Z(u_T)$.
\end{proof}

\begin{proof}[Proof of item 5 in Proposition~\ref{pro: Z process}]
Let us first show that there exists $c>0$ so that $\bP(|Z_1|\ge z) \ge c \ed^{-z/c}$ for all $z\ge 0$. 
Given $z\ge 0$, the bound
$$
	\bP (|Z_1| \ge z)
	\; \ge \; 
	\bP (u (1,x) > 0 ,\  \forall x\in[-z,z])
$$
holds. 
Since $u$ is continuous almost surely, for any $x\in\R$, 
$$
	\{ u(1,x) > 0 \} \; = \; \bigcup_{a>0} \{u(1,y) > 0,\ \forall y \in[x-a,x+a]\}.
$$
Therefore, since $\bP(u(1,x)>0) = 1/2$, there exists $a>0$ such that,
$$
	\bP (u(1,y) > 0,\ \forall y \in[x-a,x+a]) \; \ge \; 1/4.
$$
Hence, assuming $z\ge a$, by translation invariance and since the variables $(u(1,x))_{x\in\R}$ are positively correlated, we obtain 
$$
	\bP (u (1,x) > 0,\ \forall x\in[-z,z])
	\; \ge \; 
	\big( \bP (u (1,y) > 0,\ \forall y \in [- a,+ a] ) \big)^{\lceil z/a \rceil}
	\; \ge \; 
	\ed^{-\ln 4 \lceil z/a \rceil  }.
$$

Second, let us show that there exists $c>0$ so that $\bP(|Z_1|\ge z)\le \ed^{-cz}/c$. 
We first remind that, from \eqref{eq:controle G}, there exists $C>0$ such that for all $\alpha \ge 1$,
\beq
	\label{eq:controle proba G}
	\bP(\overline{G(0,\alpha)})\leq \frac1C \ed^{-C \alpha}.
\eeq
If $u\in G(0,\alpha)$ the function $u(1,\cdot)$ changes sign on $]\frac 23 K\alpha,K\alpha[ $ so that, using the same argument as in the proof of Lemma \ref{lem: definition r}, this interval intersects $\cZ_1^{\mathrm u}\cup \cZ_1^{\mathrm s}$. We consider a point $x$ in this intersection. As $u\in G(0,\alpha)$, $r_{1,x}(0)\geq \frac 13 K\alpha>0$. With the same argument there exists $y\in ]-K \alpha,-\frac 23 K\alpha[ \cap ( \cZ_1^{\mathrm u}\cup \cZ_1^{\mathrm s})$ such that  $r_{1,y}(0)\leq -\frac 13 K\alpha<0$. This implies that $G(0,\alpha) \subset \{|Z_1| \leq \frac 23 K\alpha \}$ and, together with \eqref{eq:controle proba G}, concludes the proof of this point.

Let us finally show that $Z_1$ has a bounded density. 
For this, it is enough to show that the cumulative distribution function of $Z_1$ is Lipschitz. 
Let us thus show that there exists $C >0$ such that, for any $\varepsilon > 0$ and for any $x\in \R$, 
$$
	\bP (Z_1 \in [x,x+\varepsilon]) \; \le \;  C \varepsilon .
$$
We start with the bound 
$$
	\bP (Z_1 \in [x,x+\varepsilon])
	\; \le \; 
	\bP([x,x+\varepsilon] \cap \cZ_1 \ne \emptyset)
	\; = \; 
	\bP([0,\varepsilon]\cap \cZ_1 \ne \emptyset). 
$$
In the sequel, to simplify writings, let us write $u(x)$ for $u(1,x)$ for any $x\in\R$.
By a second order Taylor expansion, there exists a function $\theta : [0,\varepsilon] \to [0,\varepsilon]$ such that, for all $x\in [0,\varepsilon]$, 
\begin{equation}\label{eq: second order expansion}
	u(x) = u(0) + \partial_x u (0) x + \frac{\partial_{xx}u(\theta(x))x^2}{2}.  
\end{equation}
Let $\delta > 0$ to be fixed later and let us decompose $\bP([0,\varepsilon]\cap \cZ_1 \ne \emptyset)$ according to the following alternative: 
\begin{align}
	\bP([0,\varepsilon]\cap \cZ_1 \ne \emptyset)
	\; \le& 
	\; \bP(\exists y \in[0,\varepsilon] : |u(0) + \partial_x u (0) y| \le \delta ) \nonumber\\
	\; &+ \; \bP(\exists x \in [0,\varepsilon] : u(x)=0 \quad \text{and} \quad  \forall y \in [0,\varepsilon] : |u(0) + \partial_x u (0) y| > \delta ).
	\label{eq: decomposition alternative}
\end{align}
To get a bound on the first term, we notice that $u(0)$ and $\partial_x u (0)$ are independent Gaussian variables, 
and one finds that there exists $C>0$ such that, for any $\delta \in]0,\varepsilon]$ and any $\varepsilon>0$,
\begin{equation}\label{eq: first bound explicit}
	\bP(\exists y \in[0,\varepsilon] : |u(0) + \partial_x u (0) y| \le \delta ) \; \le \; C \varepsilon . 
\end{equation}
To get a bound on the second term, we use the expansion~\eqref{eq: second order expansion}: 
\begin{align}
	&\bP(\exists x \in [0,\varepsilon] : u(x)=0 \quad \text{and} \quad  \forall y \in [0,\varepsilon] : |u(0) + \partial_x u (0) y| > \delta ) \nonumber\\
	&= \; 
	\bP\left( \exists x \in [0,\varepsilon] : \frac{\partial_{xx}u(\theta(x))x^2}{2} = - (u(0) + \partial_x u (0) x)
	\quad \text{and} \quad  
	\inf_{y \in [0,\varepsilon]}  |u(0) + \partial_x u (0) y| > \delta 
	 \right) \nonumber\\
	 & \le \; 
	 \bP(\exists x \in [0,\varepsilon] : |\partial_{xx} u (\theta (x))|x^2 \ge 2\delta) \nonumber\\
	 & \le \; 
	 \bP \left( \sup_{x \in [0,\varepsilon]} |\partial_{xx} u (x)| \ge \frac{2\delta}{\varepsilon^2}\right) 
	 \; \le \; 
	 \frac{\varepsilon^2}{2 \delta} \bE \left(\sup_{x \in [0,\varepsilon]} \partial_{xx} u (x)\right)
	 \; \le \; 
	 \frac{C \varepsilon^2}{2 \delta}
	 \nonumber
\end{align}
where the last bound follows from Lemma~\ref{lem: upper bounds on u and del u}.
Therefore, taking $\delta = \varepsilon$, 
we obtain the claim by inserting this last bound together with \eqref{eq: first bound explicit} into \eqref{eq: decomposition alternative}. 
\end{proof}

%%%%%%%%%%%%%%%%%%%%%%%%%%%%%%%%%%%%%%%%%%%%%%%%%%
%%%%%%%%%%%%%%%%%%%%%%%%%%%%%%%%%%%%%%%%%%%%%%%%%%
%%%%%%%%%%%%%%%%%%%%%%%%%%%%%%%%%%%%%%%%%%%%%%%%%%
%%%%%%%%%%%%%%%%%%%%%%%%%%%%%%%%%%%%%%%%%%%%%%%%%%

\section{Proof of Theorem~\ref{the: long time}}\label{sec: long time}

Given $T>0$, let us define the processes $(\tilde Z_\theta^T)_{\theta \ge 0} = (T^{-1/2}Z_{\theta T})_{\theta \ge 0}$ as well as 
$(\tilde X_\theta^T)_{\theta \ge 0} = (T^{-1/2}X_{\theta T})_{\theta \ge 0}$.
Let us also define $(Y_\theta^T)_{\theta\ge 0}$ by $Y_0^T = 0$ and, for $\theta > 0$, 
$$
	\frac{\dd Y_\theta^T}{\dd \theta} \; = \; T^{1/4} u (\theta,Y_\theta^T).
$$ 
Note that this definition makes sense, as can be shown exactly with the same arguments as in the proof of Proposition~\ref{pro: X well defined}.

Let us first show that 
\begin{equation}\label{eq: egalite en loi entre couples}
	\big( \tilde Z_\theta^T, \tilde X_\theta^T\big)_{\theta \ge 0} 
	\; = \; 
	\big( Z_\theta,Y_\theta^T  \big)_{\theta \ge 0} 
	\quad \text{in law}.
\end{equation}
For this, it is convenient to explicitly write the couple of processes as a function of the environment. 
Given an environment $u$, let $(u_T(t,x))_{t\geq 0,x\in \R} = (T^{1/4} u (Tt,T^{1/2}x))_{t\geq 0,x\in \R}$ and let us show that 
\begin{equation*}%\label{eq: egalite ps entre couples}
	\big( \tilde Z_\theta^T, \tilde X_\theta^T\big) (u) \; = \; \big( Z_\theta,Y_\theta^T  \big) (u_T)
\end{equation*}
for any $\theta \ge 0$. 
As $u$ and $u_T$ have the same law by the scaling relation~\eqref{eq: scaling}, this will imply \eqref{eq: egalite en loi entre couples}. 
The relation $\tilde Z_{\theta}^T (u) = Z_{\theta}(u_T)$ has already been shown in the proof of item 4 in Proposition~\ref{pro: Z process}.
To show $\tilde X_{\theta}^T (u) = Y_\theta^T (u_T)$, we notice that $\tilde X_{0}^T = 0$ and that for all $\theta > 0$,
$$
\frac{\dd \tilde X_\theta^T}{\dd \theta} \; = \; T^{1/4} u_T(\theta,\tilde X_\theta^T) 
$$
and the claim follows from the fact that these relations characterize the process $(Y_\theta^T)_{\theta \geq 0}(u_T)$. 

To prove Theorem~\ref{the: long time}, 
it is thus enough to prove that, almost surely, $Y^T$ converges to $Z$ in the $\cM_1$ topology on compact sets as $T\to\infty$. 
Indeed, this implies that $Y^T - Z$ converges to 0 in probability as $T \to \infty$ 
and, thanks to \eqref{eq: egalite en loi entre couples},
this implies that $(T^{-1/2}(X_{\theta T} - Z_{\theta T}))_{\theta \ge 0}$ converges to 0 in probability as $T\to\infty$.
For notational convenience, we will show that $(Y_t^T)_{t\in[0,1]}$ converges to $(Z_t)_{t\in[0,1]}$, 
but our proof still holds for $[0,1]$ replaced by any compact interval.

We use characterization $(v)$ of \cite{whitt} for the convergence in the $\cM_1$ topology. We first introduce some notations needed to state it. 
Given $a,b,c\in\R$, let 
$$
	\| a - [b,c] \| \; = \; \min_{\tau \in [0,1]} |a - (\tau b  + (1-\tau)c) |.
$$
For $\delta>0$ and $f,g \in D([0,1],\R)$, let 
$$
	v(f,g,t,\delta)= \sup\{ |f(t_1)-g(t_2)|,\ 0\vee (t-\gd) \leq t_1,t_2 \leq 1 \wedge (t+\gd) \}
$$
and 
$$
	w_s(f,t,\delta)= \sup\{\|f(t_2) - [f(t_1),f(t_3)]\|,\ 0\vee (t-\gd) \leq t_1< t_2 < t_3 \leq 1 \wedge (t+\gd) \}.
$$
The characterization is the following: $f^T\to f$ converges to $f$ as $T\to \infty$ for the $\cM_1$ topology on $\cD([0,1])$ if and only if 
\begin{enumerate}
\item $f^T(1)$ converges to $f(1)$
\item For all $0\leq t\leq 1$ that is not a discontinuity point of $f$.
$$
	\lim_{\delta\to 0}\lim_{T\to +\8} v(f^T,f,t,\delta)\; =\; 0.
$$
\item For all $0\leq t\leq 1$ that is a discontinuity point of $f$
$$
	\lim_{\delta\to 0}\lim_{T\to +\8} w_s(f^T,t,\delta)\; =\; 0.
$$
\end{enumerate}

The first point is actually a consequence of the second one as, for all $t \geq 0$ (and in particular for $t=1$), almost surely, $Z$ is continuous at $t$. Indeed we first observe, from item $4$ in Proposition \ref{pro: Z process}, that $s\to \bP(Z_{s^-}\neq Z_{s})$ is constant on $\R_+^*$. Then 
$$
	\int \bP(Z_{s^-}\neq Z_{s})\ ds \; = \; \bE\left(\int 1_{Z_{s^-}\neq Z_{s}}\ ds\right)\; = \; 0,
$$
as almost surely the discontinuity points of $Z$ are countable. This implies that $\bP(Z_{s^-}\neq Z_{s})=0$ for all $s>0$. 

We start with the proof of the second item in the above characterization: 
\begin{lemma}\label{lem: point2}
	Almost surely, for all $t_0 \in [0,1]$ such that $Z$ is continuous in $t_0$, 
	$$
		\lim_{\delta\to 0}\lim_{T\to +\8} v(Y^T,Z,t_0,\delta)=0.
	$$
\end{lemma}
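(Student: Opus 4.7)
\smallskip
\noindent\textbf{Plan of proof.} The key mechanism is that the equation $\dot Y^T_\theta = T^{1/4} u(\theta, Y^T_\theta)$ carries a large coefficient $T^{1/4}$, so that $Y^T$ relaxes on a time scale of order $T^{-1/4}$ to the stable zero of $u(\theta,\cdot)$ in whose basin of attraction it currently sits. The process $Z$ is designed precisely to track that basin: starting from $Z_0 = Y^T_0 = 0$, the branches $s\mapsto r_{(t,x)}(s)$ of Lemma~\ref{lem: definition r} record which stable zero any given one connects to, and the basin boundaries are the neighbouring unstable zeros. Near a time $t_0$ of continuity of $Z$, no basin collision occurs, and one expects $Y^T_t\simeq Z_t + O(T^{-1/4})$ uniformly on a small interval around $t_0$.

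\smallskip
First I would use item~$3$ of Proposition~\ref{pro: Z process} to choose $\delta_0>0$ so small that $Z$ has no jumps on $J := [0,1]\cap[t_0-\delta_0,t_0+\delta_0]$. On $J$ the process $Z$ is smooth and coincides with a single branch $t\mapsto r_{(t_0+\delta_0,x^*)}(t)$ of stable zeros. By continuity and compactness, there exist $\kappa>0$, $M>0$ and $\eta_0>0$ such that on $J$ one has $\partial_x u(t,Z_t)\le -\kappa$, while $|\dot Z_t|$ and $|\partial_{xx}u(t,y)|$ are bounded by $M$ uniformly for $y$ in the $\eta_0$-neighbourhood of $\{Z_t : t\in J\}$, and the distance from $Z_t$ to the two neighbouring unstable zeros (the basin boundaries) is at least $2\eta_0$ for every $t\in J$.

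\smallskip
The core is then a Gronwall contraction. Writing $w(t)=Y^T_t-Z_t$ and using the second-order Taylor expansion of $u(t,\cdot)$ at $Z_t$ together with $u(t,Z_t)=0$, one obtains $\dot w(t) = T^{1/4}\partial_x u(t,Z_t)\,w(t) + T^{1/4}O(w(t)^2) - \dot Z_t$. Setting $\psi = w^2$, as long as $|w(t)|\le \eta_0$ this yields
\[
\dot\psi(t) \;\le\; -2\kappa\,T^{1/4}\,\psi(t) \;+\; C\,T^{1/4}\,\psi(t)^{3/2} \;+\; 2M\sqrt{\psi(t)}.
\]
Choosing $\eta_1\in(0,\eta_0]$ with $C\eta_1\le\kappa$, this becomes $\dot\psi\le -\kappa T^{1/4}\psi + 2M\sqrt{\psi}$ whenever $\psi\le \eta_1^2$, so $\psi$ is driven into the region $\psi\le (2M/(\kappa T^{1/4}))^2$ within a time $O(T^{-1/4}\log T)$ and stays there. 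Provided (i) $\psi(t_0-\delta_0)\le \eta_1^2/2$ for all $T$ large, and (ii) $\psi$ never leaves $[0,\eta_1^2]$ on $J$, this gives $\sup_{t\in J}|Y^T_t-Z_t|=O(T^{-1/4})$, whence $v(Y^T,Z,t_0,\delta_0) \le \sup_{t\in J}|Y^T_t-Z_t| + \sup_{t_1,t_2\in J}|Z_{t_1}-Z_{t_2}| \to 0$ in the iterated limit by continuity of $Z$ at $t_0$.

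\smallskip
The main obstacle is establishing the entry condition~(i): at the left endpoint of $J$, $Y^T$ must already lie close to $Z$ \emph{and} in the correct basin. I would obtain this by induction on the finitely many jump times of $Z$ in $[0,t_0+\delta_0]$ (finite by item~$3$ of Proposition~\ref{pro: Z process}): between two consecutive jumps the contraction above propagates the closeness, and at each jump one uses the continuity of $Y^T$ together with the local structure of the neutral zero recorded in~\eqref{eq: no zero below} to check that $Y^T$ is funnelled into the new basin that $Z$ selects after the jump (this is essentially the content of the $w_s$ estimate, handled in a companion lemma for item~$3$ of the $\mathcal M_1$ characterization). The base case near $t=0$, where $u$ is singular, can be reduced to a bounded-time estimate by the scaling relation~\eqref{eq: scaling}, combined with the short-time bound $|Z_t|\le t^{1/2-\varepsilon}$ established in the proof of item~$1$ of Proposition~\ref{pro: Z process}, which ensures $Y^T_\varepsilon$ and $Z_\varepsilon$ share the same basin with probability tending to $1$ as $T\to\infty$ for any fixed small $\varepsilon>0$.
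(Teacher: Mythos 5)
Your local contraction step (the Gronwall estimate around the stable branch, using $\partial_x u(t,Z_t)\le -\kappa$ on a compact interval free of jumps) is sound and is close in spirit to the paper's barrier computation in \eqref{eq: corridor 1}. The genuine gap is in your entry condition (i), i.e.\ in how $Y^T$ is shown to be near $Z$, and in the correct basin, at the left end of $J$. You propose an induction over "the finitely many jump times of $Z$ in $[0,t_0+\delta_0]$", citing item~3 of Proposition~\ref{pro: Z process}; but that item only controls compact intervals of $\R_+^*$, and by the scale invariance of item~4 the jumps of $Z$ accumulate at $0$, so the induction has no admissible starting point at $t=0$. Your attempted base case at a fixed small time $\varepsilon$ does not close this: the bounds $|Z_t|\le t^{1/2-\varepsilon'}$ and the analogous a priori bound on $Y^T$ only place both points in an interval of width $\varepsilon^{1/2-\varepsilon'}$, whereas the basins of attraction of $u(\varepsilon,\cdot)$ have width of order $\varepsilon^{1/2}$, so "same basin" simply does not follow, and in any case a statement holding "with probability tending to $1$" cannot yield the almost sure conclusion the lemma asserts. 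There is also a latent circularity: the jump-time funnelling you delegate to the companion lemma for item~3 of the $\cM_1$ criterion is, in the paper, proved \emph{using} the present lemma to control $Y^T$ just before the jump, so the two cannot be stacked in the order you propose without a genuinely new argument.

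The paper avoids basins and jump-by-jump induction altogether. It shows a global two-sided barrier estimate valid on all of $[0,t]$: for small $\varepsilon$ and large $T$, $r_{(t,L_t)}(s)-\varepsilon\le Y^T_s\le r_{(t,R_t)}(s)-\varepsilon$ for every $s\in[0,t]$. The corridor is anchored at time $0$ because $r_{(t,L_t)}(0)<0<r_{(t,R_t)}(0)$ while $|Y^T_s|\le s^{1/2-\varepsilon'}$ uniformly in $T$, and it is preserved because a second-order expansion gives $\pm u\ge c\varepsilon/2$ on the shifted curves, so the drift $T^{1/4}u$ dominates the (bounded) speed of the curves from Lemma~\ref{lem: definition r}. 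Since the corridor is still of macroscopic width, a separate funnelling step is then used on a window $[t_0-2\delta,t_0-\delta]$ where $Z=L$ and $u<-c$ between $L+\varepsilon$ and $R-\varepsilon$, forcing $Y^T$ down to within $\varepsilon$ of the stable branch, after which the barrier argument keeps it there up to $t_0+\delta$. Some such mechanism that selects the correct zero without assuming closeness at an earlier time is exactly what is missing from your proposal.
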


\begin{proof}
We first consider the case $t_0=0$. 
As almost surely $u\in G(0,\alpha_0)$, for all $T>0$ and all $0 < t \leq 1$ small enough,
$$
	|Y_t^T| \leq   \sum_{n\geq \lfloor -\ln_2 t \rfloor} 2 \ell_n(0{,\alpha_0})= \sum_{n\geq  \lfloor -\ln_2 t \rfloor} 2 (\alpha_0+n^2) \sqrt{2^{-n}}.
$$
This implies that, for any $\varepsilon > 0$, $t>0$ small enough and for any $T>0$,
\begin{equation}\label{eq: a priori bound Y}
		|Y_t^T| \; \le \; t^{\frac12 - \varepsilon}.
\end{equation}
As $Z$ is right continuous at $0$ with limit $0$ this gives the result in the case $t_0=0$.

We consider $0<t_0 \leq 1$ so that $Z$ is continuous at $t_0$ and fix some $\gep>0$. 
To fix ideas, and as the other case is completely similar, let us assume that $Z_{t_0}=L_{t_0}$. 
Using Proposition \ref{pro: Z process}, there exists $\gd>0$ so that $Z$ is continuous on $[t_0-2\gd,t_0+\gd]$ so that, if $\gd>0$ has been chosen small enough, 
$$
	v(Z,Z,t_0,\delta)\; < \; \gep,
$$
and we only have to show that for $\gd >0$ small enough and all $T$ larger than some $T_0(\gd)$,
\beq\label{eq: to prove in proof of point 2 of criterion}
	\sup\{ |Z_s-Y^T_s|,\  t_0-\gd \leq s \leq  t_0+\gd \}\; <\; \gep.
\eeq

We use the notations $t_i=t_0+i \gd$, $i \in\{-2,-1,0,1\}$. 
We stress that for all $t \in [t_{-2},t_{1}]$, 
$Z_t = r_{(t_1,L_{t_1})}(t)$ as, for $x\in \cZ_{t_1}^{\mathrm s}$, 
$r_{(t_1,x)}$ is the only continuous function so that $r_{(t_1,x)}(t_1)=x$ and $u(s,r_{(t_1,x)}(s))=0$ for $0 < s \le t_1$. 
Using Lemma \ref{pro: Z process}, it is also possible to choose $\gd>0$ small enough so that the only neutral zeros of $u$ in 
\beq \label{eq:noneutral}
	\bigcup_{t_{-2}\leq s \leq t_1} [r_{(t_1,L_{t_1})}(s),r_{(t_1,R_{t_1})}(s)]
\eeq
lies in $\cZ_{t_0}$. Using Remark \ref{rq:aussiLR}, 
this choice for $\gd$ implies that $R$ is continuous on $[t_{-2},t_{-1}]$ and, with the same argument as above, that for all $t \in [t_{-2},t_{-1}]$, 
$R_t = r_{(t_{-1},R_{t_{-1}})}(t)$. 
Note however that it is not necessarily the case that $R_t = r_{(t_{1},R_{t_{1}})}(t)$, as $R$ could jump at time $t_0$.
	
{Before going to the proof of \eqref{eq: to prove in proof of point 2 of criterion} itself, let us first prove the following intermediate result: } For all $t>0$ so that $Z_t=L_t$, and if $\varepsilon > 0$ has been chosen small enough, there exists $T_0>0$ so that for all $T \ge T_0$ : 
\beq
	\label{eq: corridor 1}
	r_{(t,L_{t})} (s) - \varepsilon \; \le \; Y_s^T \; \le \; r_{(t,R_{t})} (s) - \varepsilon \quad \text{for all } s\in [0,t].
\eeq
	
	By the definition of $L$ and $R$ in Proposition~\ref{pro: Z process}, it holds that $r_{(t,L_{t})}(0)<0$ and $r_{(t,R_{t})}(0)>0$. 
	Since the functions $r_{(t,L_{t})}$ and  $r_{(t,R_{t})}$ are continuous, 
	and since $Y^T$ satisfies the bound \eqref{eq: a priori bound Y}, 
	we conclude that there exists $\tau \in ]0,t]$ so that \eqref{eq: corridor 1} holds for $s \in [0,\tau]$.
	Let us now assume that $s \in [\tau,t]$ and show the lower bound on $Y^T$ in \eqref{eq: corridor 1} (the proof of the upper bound is analogous). 
	By Lemma \ref{lem: definition r}, the function $s\to \partial_x u (s,r_{(t,L_{t})}(s))$ is continuous and strictly negative on $[\tau,t]$ so that by compactness, 
	there exists $c>0$ such that
	$$
		\partial_x u (s,r_{(t,L_{t})}(s)) \; \le \;  - c \quad \text{for all}\quad s \in [\tau,t].
	$$
	For $s\in [\tau,t]$, let
	$$
		\tilde r_{(t,L_{t})}(s) \; = \; r_{(t,L_{t})}(s) - \varepsilon. 
	$$
	A second order expansion yields
	$$
		u(s,\tilde r_{(t,L_{t})}(s))
		\; = \; 
		-\varepsilon \partial_x u (s,r_{(t,L_{t})}(s))
		+ \frac{\varepsilon^2}{2} \partial_{xx} u (s,y_\varepsilon(s))
	$$
	with $y_\varepsilon(s) \in [\tilde r_{(t,L_{t})}(t),r_{(t,L_{t})}(s)]$.
	By continuity of $\partial_{xx}u$ and compactness, there exists $K\ge 0$ such that 
	\beq
	\label{eq: lower bound u}
		u(s,\tilde r_{(t,L_{t})}(s)) 
		\; \ge \; 
		c \varepsilon - K \varepsilon^2 
		\; \ge \; 
		\frac{c \varepsilon}{2}
	\eeq
	for all $s\in [\tau, t]$, provided $\varepsilon > 0$ was taken small enough.
	Suppose now that the lower bound in \eqref{eq: corridor 1} is not satisfied 
	so that there exists $s\in[\tau,t]$ such that $Y_s = \tilde r_{(t,L_{t})}(s)$ and  
	$\partial_s Y_s \le \partial_s \tilde r_{(t,L_{t})}(s) = \partial_s r_{(t,L_{t})}(s)$ i.e.\@ explicitly
	\beq\label{eq:comparaison vitesse}
		T^{\frac14} u(s,\tilde r_{(t,L_{t})}(s)) \; \le \; - \frac{\partial_{xx} u (r_{(t,L_{t})}(s))}{\partial_x u (r_{(t,L_{t})}(s))}.
	\eeq
	Since the right hand side is uniformly bounded in $s\in[\tau,t]$, 
	the lower bound \eqref{eq: lower bound u} leads to a contradiction for $T$ large enough. This concludes the proof of \eqref{eq: corridor 1}.

	Let us now derive the result \eqref{eq: to prove in proof of point 2 of criterion} from \eqref{eq: corridor 1}. 
	We choose $T_0$ large enough so that \eqref{eq: corridor 1} holds both for time $t_1$ and $t_{-2}$.
	It remains to show that for $T$ large enough, $Y_{t}^T\le r_{(t_1,L_{t_1})} (t) + \varepsilon$ for all $t \in [t_{-1}, t_1]$. 
	For this, we first show that there exists $t_\star\in [t_{-2}, t_{-1}]$ such that $Y_{t_\star}^T\le r_{(t_1,L_{t_1})} (t_\star) + \varepsilon$.
	By the definition of $L$ and $R$, and since the set $\cW$ defined in \eqref{eq: the set W of proba 0} has probability $0$, for all $t>0$, it holds that
	$]L_t\cap R_t[\cap (\cZ_t^{\text{s}} \cup \cZ_t^{\text{u}})=\emptyset $. 
	Hence, the choice of $\gd$ made before \eqref{eq:noneutral} implies
	$$
		\left(\bigcup_{t_{-2}\leq s \leq t_{-1}} ]L_t,R_t[\right)\cap \{u=0\}\; = \; \emptyset .
	$$
	As $Z_t=L_t$ for all $t \in [t_{-2},t_{-1}]$, we obtain that $u (t,x) < 0$ for all $(t,x)\in \bigcup_{t_{-2}\leq s \leq t_{-1}} ]L_s,R_s[$ and thus, by compactness, 
	there exists $c > 0$ such that $u(t,x)<-c$ for all $(t,x)$ such that $L_t + \varepsilon \le x \le R_t - \varepsilon$ with $t\in[t_0 - 2 \delta, t_0 - \delta]$.
	Assume by contradiction that $Y_t > L_t + \varepsilon$ for all $t \in [t_{-2},t_{-1}]$. 
	Then, since we know that $Y_{t}\le R_t - \varepsilon$, we conclude that 
	$$
		Y_t  \le  Y_{t_{-2}} - c T^{1/4} (t - t_{-2}). 
	$$
	For $T$ large enough, this yields a contradiction.
	Second, once we know that $Y_{t_\star} \le r_{(t_1,L_{t_1})} (t_\star) + \varepsilon$, 
	we may proceed  as in the proof of \eqref{eq: corridor 1} and show that, for $T$ large enough, $Y_t \le r_{(t_1,L_{t})} (t) + \varepsilon$ for all $t \in [t_\star,t_1]$. 
\end{proof}

Next, we turn to the proof of the third item in the above characterization:

\begin{lemma}\label{lem: point3}
	Almost surely, for all $t_0 \in [0,1]$ such that $t_0$ is a jump point of $Z$, 
	$$
		\lim_{\delta\to 0}\lim_{T\to +\8} w_s(Y^T,t_0,\delta)=0.
	$$
	\end{lemma}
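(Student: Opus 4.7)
The plan is to analyze the local geometry of $u$ at the jump and show that $Y^T$ stays pinned near $z_0 := Z_{t_0^-}$ up to time $t_0$, then crosses monotonically and in time $O(T^{-1/4})$ to a neighborhood of $z_1 := Z_{t_0}$. By item~2 of Proposition~\ref{pro: Z process}, $z_0 \in \cZ_{t_0}^{\mathrm n}$, and by Lemma~\ref{lem: proba zero events} item~1, $\partial_t u(t_0,z_0)\ne 0$. Up to the spatial reflection $x\mapsto -x$ (which preserves the law of $u$, sends $Y^T$ to $-Y^T$ and $Z$ to $-Z$, and flips the sign of $\partial_t u$ at the mirrored neutral zero), we assume $\partial_t u(t_0,z_0)>0$. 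The heat equation gives $\partial_{xx} u(t_0,z_0)>0$, so $z_0$ is a strict local minimum of $u(t_0,\cdot)$ with value $0$: for $t<t_0$ close, $u(t,\cdot)$ has two simple zeros $Z_t^{\mathrm s}<z_0<Z_t^{\mathrm u}$ (stable and unstable) merging at $z_0$, with $Z_t=Z_t^{\mathrm s}$; for $t>t_0$ close, $u(t,\cdot)>0$ in a neighborhood of $z_0$, so necessarily $z_1>z_0$. The stable zero $z_1$ extends by the implicit function theorem to a smooth branch $\tilde z_1$ with $\tilde z_1(t)=Z_t$ for $t\in[t_0,t_0+\eta)$.

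Fix $\epsilon>0$. Using item~2 of Lemma~\ref{lem: proba zero events} (finiteness of neutral zeros in compact sets), pick $\delta>0$ so that $t_0\pm\delta$ are continuity points of $Z$, the branches $Z^{\mathrm s},Z^{\mathrm u},\tilde z_1$ lie within $\epsilon/8$ of $z_0,z_0,z_1$ respectively, and the only neutral zero of $u$ in $(t_0-\delta,t_0+\delta)\times[z_0-\epsilon,z_1+\epsilon]$ is $(t_0,z_0)$. A sign-count argument then forces $u(t_0,\cdot)>0$ strictly on $(z_0,z_1)$: any intermediate stable zero would contradict the minimality of $z_1$, an intermediate unstable zero would have to be paired with a stable one on its right, and any intermediate neutral zero has been excluded. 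By compactness and continuity, we obtain $c>0$ such that $u(t,x)\ge c$ for all $(t,x)\in[t_0-\delta,t_0+\delta]\times[z_0+\epsilon/4,\tilde z_1(t)-\epsilon/4]$; the negative slab $\{t<t_0,\, x\in[Z_t^{\mathrm s},Z_t^{\mathrm u}]\}$ lies below $z_0+\epsilon/4$ and is excluded.

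The corridor arguments used in the proof of Lemma~\ref{lem: point2}, applied to the smooth stable branches $Z^{\mathrm s}$ (on $[t_0-\delta,t_0]$) and $\tilde z_1$ (on $[t_0-\delta,t_0+\delta]$), yield for $T$ large that $|Y^T(t)-Z_t^{\mathrm s}|\le\epsilon/4$ on $[t_0-\delta,t_0]$ and $Y^T(t)\le\tilde z_1(t)+\epsilon/4$ on the whole interval; the starting estimate $Y^T(t_0-\delta)\approx z_0$ comes from Lemma~\ref{lem: point2} at the continuity point $t_0-\delta$. Whenever $Y^T(t)$ enters the transition corridor $[z_0+\epsilon/4,\tilde z_1(t)-\epsilon/4]$, the lower bound $\partial_t Y^T(t)=T^{1/4}u(t,Y^T(t))\ge T^{1/4}c$ forces $Y^T$ to traverse it strictly monotonically in total time $O(T^{-1/4})\to 0$. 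Together these statements produce, for $T$ large, a (random) $t^\star\in[t_0,\,t_0+O(T^{-1/4})]$ such that $Y^T\in[z_0-\epsilon/2,z_0+\epsilon/2]$ on $[t_0-\delta,t_0]$, $Y^T$ is non-decreasing on $[t_0,t^\star]$, and $Y^T\in[z_1-\epsilon/2,z_1+\epsilon/2]$ on $[t^\star,t_0+\delta]$. A short case analysis on which of the three regimes (``near $z_0$'', ``in transition'', ``near $z_1$'') each of $Y^T(t_1),Y^T(t_2),Y^T(t_3)$ falls into, for $t_1<t_2<t_3\in[t_0-\delta,t_0+\delta]$, bounds $\|Y^T(t_2)-[Y^T(t_1),Y^T(t_3)]\|$ by $2\epsilon$, so $w_s(Y^T,t_0,\delta)\le 2\epsilon$ for all $T$ large. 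The main difficulty is the structural analysis of the zero set of $u$ near $(t_0,z_0)$, in particular securing the uniform positivity of $u$ in the transition corridor; once this is in hand, the monotonic-crossing mechanism is essentially mechanical.
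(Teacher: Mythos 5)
Your overall strategy (pin $Y^T$ near $z_0=Z_{t_0-}$ up to $t_0$, then let it cross monotonically to the stable branch through $z_1=Z_{t_0}$) is the same as the paper's, and several ingredients are correct (at a jump $z_0\in\cZ_{t_0}^{\mathrm n}$ with $\partial_t u(t_0,z_0)\ne 0$ by Lemma~\ref{lem: proba zero events}; the heat equation gives the local-minimum picture; the jump is rightward after your normalization; Lemma~\ref{lem: point2} at the continuity point $t_0-\delta$ gives the starting estimate). However, two steps are not justified as written. The first is your choice of $\delta$ so that $(t_0,z_0)$ is \emph{the only} neutral zero in $(t_0-\delta,t_0+\delta)\times[z_0-\varepsilon,z_1+\varepsilon]$. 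Item 2 of Lemma~\ref{lem: proba zero events} gives finiteness in a compact box, and shrinking $\delta$ removes neutral zeros at times different from $t_0$, but it cannot remove a neutral zero sitting at time exactly $t_0$ at another location $w\in(z_0,z_1)$ (a simultaneous annihilation). Nothing in the paper excludes this: in the proof of item 1 of Proposition~\ref{pro: Z process} the authors explicitly say they believe, but do not prove, that $]L_t,R_t[$ contains at most one neutral zero. If such a $(t_0,w)$ exists in your corridor, then $u(t_0,w)=0$ and $u(t,\cdot)<0$ on a small interval around $w$ for $t<t_0$ close, so both your strict positivity of $u(t_0,\cdot)$ on $(z_0,z_1)$ and the uniform bound $u\ge c$ on $[t_0-\delta,t_0+\delta]\times[z_0+\varepsilon/4,\tilde z_1(t)-\varepsilon/4]$ fail, and with them the $O(T^{-1/4})$ crossing time. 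The paper is structured precisely to avoid this: it only uses that $u$ keeps a constant positive sign on $]L_t,R_t[$ for $t\in\,]t_0,t_0+\delta]$, which follows from continuity of $R$ after $t_0$ (see \eqref{eq:sgnconstant}), and then only monotonicity of $Y^T$ after $t_0$ — no uniform speed and no completed crossing are needed for the $\cM_1$ modulus.

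The second problem is the pre-jump confinement. You claim $|Y^T_t-Z^{\mathrm s}_t|\le\varepsilon/4$ on all of $[t_0-\delta,t_0]$ ``by the corridor arguments of Lemma~\ref{lem: point2}''. That argument requires $\partial_x u(s,r(s))\le -c<0$ uniformly along the stable branch, obtained by compactness; here the branch terminates at the neutral zero, where $\partial_x u$ vanishes, so no such $c$ exists, and moreover for $t$ close to $t_0$ the point $Z^{\mathrm s}_t+\varepsilon/4$ lies beyond the unstable partner $Z^{\mathrm u}_t$, where $u>0$, so the putative upper barrier even has the wrong sign. The paper replaces this degenerate corridor by fixed spatial barriers coming from the quadratic expansion \eqref{eq: zero locally} of the zero set at $(t_0,z_0)$, namely $u(t,Z_{t_0-}-\varepsilon)>0$ and $u(t,Z_{t_0-})<0$ for $t\in[t_0-\delta,t_0)$ (see \eqref{eq:before}), which confine $Y^T$ to $]Z_{t_0-}-\varepsilon,Z_{t_0-}[$ with no uniform slope required. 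Your pre-jump pinning should be reargued along these lines, and the post-jump phase should rely on sign constancy on $]L_t,R_t[$ plus monotonicity rather than on a uniform lower bound for $u$ in the transition corridor.
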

\begin{proof} 
We first describe how the environment looks like around a fixed jump point $t_0\in]0,1[$ of $Z$. 
In the following we will always suppose that $\gd>0$ is small enough so that $t_0$ is the only jump of $Z$ on $[t_0-\gd,t_0+\gd]$. 
As the three other cases are similar, we may also assume that $Z_s=L_s$ for all $t_0-\gd \leq s<t_0$ and $Z_s=R_s$ for all $t_0 \leq s \leq t_0+\gd$. 
We also consider $\gd$ small enough so that $R$ is continuous on $[t_0,t_0+\gd]$. 
Using Remark \ref{rq:aussiLR} this implies that  
\beq
\label{eq:sgnconstant}
\left(\bigcup_{t_0 < s \leq t_0+\gd} ]L_s,R_s[\;\right) \cap \{u=0\}\; = \; \emptyset.
\eeq

We first focus on the behaviour of the environment just before the jump and prove that, for all $\gep >0$ (small enough), 
there exists $\gd>0$ so that for all $t_0-\gd \leq t < t_0$,
\beq
\label{eq:before}
\ba
&u(t,Z_{t_0-}-\gep) > 0,\\
&u(t,Z_{t_0-}) < 0,\\
&Z_{t_0-\gd}\in ]Z_{t_0-}-\gep,Z_{t_0-}[.
\ea
\eeq
We next describe the environment just after the jump at time $t_0$: For all $\gep>0$ (small enough) there exists $\gd>0$ so that
\beq
\label{eq:after}
\ba
&L_t < Z_{t_0-}-\gep \quad \text{for all } t_0 \leq t \leq t_0+\gd,\\
& \sup\{|Z_s-Z_t|,\ t_0\leq s,t\leq t_0+\gd\} \leq  \gep.
\ea
\eeq

We delay the proof of these two points and first assume that \eqref{eq:before} and \eqref{eq:after} hold for some $\gep>0$ and $\gd>0$. 
We prove that it implies that, for $T$ large enough, 
\beq
\label{eq:Ysaut}
\ba
&Z_{t_0-}-\gep \leq  Y^T_t \leq Z_{t_0-} \quad \text{for all } t_0-\gd \leq t \leq t_0, \\
& Y^T \textrm{ is increasing on } [t_0,h] \textrm{ where } h=\inf\{t\geq t_0,\ Y^T_t \geq Z_{t}-\gep\} \wedge (t_0+\gd),\\
& Y^T_t \in [Z_{t}-\gep,Z_{t}+\gep] \quad \text{for all } h < t \leq t_0+\gd.\\
\ea
\eeq
Indeed, for the first point of \eqref{eq:Ysaut}, as $t_0-\gd$ is not a jump point of $Z$ and $Z_{t_0-\gd}\in ]Z_{t_0-}-\gep,Z_{t_0-}[$, Lemma \ref{lem: point3} ensures that for $T$ large enough $Y^T_{t_0-\gd}$ lies also in $]Z_{t_0-}-\gep,Z_{t_0-}[$ and both barriers defined in \eqref{eq:before} ensures that $Y^T$ stays in this interval till $t_0$.
For the second point of \eqref{eq:Ysaut}, from \eqref{eq:sgnconstant} and \eqref{eq:after} we deduce that $u>0$ in the domain $\{(t,x),t_0 \leq t \leq t_0+\gd; Z_{t_0-}-\gep \leq x \leq  R_{t}-\gep\}$ and as $Y^T_{t_0} \in ]Z_{t_0-}-\gep,Z_{t_0-}[$ this implies that $Y^T$ is increasing on $[t_0,h]$.
The proof of the last point in \eqref{eq:Ysaut} follows with the argument that has been used to prove \eqref{eq: corridor 1}.

One can check that conditions in \eqref{eq:Ysaut} together with the second point in \eqref{eq:after} implies that $w_s(Y^T,t_0,\delta)<2\gep$ and that concludes the proof.
It remains to prove \eqref{eq:before} and \eqref{eq:after}.

For \eqref{eq:after},
as $L_{t_0}<Z_{t_0-}-\gep$ (if $\gep$ is small enough), continuity of $L$ ensures that is still true for $t\in[t_0,t_0+\gd]$ if $\delta$ is taken small enough, 
and this yields the first point of \eqref{eq:after}.
The second one follows from uniform continuity.

We turn to \eqref{eq:before}. 
As $Z_{t_0-}\in\mathcal Z^\mathrm{n}_t$, arguing as in the proof of the second item of Lemma \ref{lem: proba zero events}, 
there exists a function $S$ defined in a neighborhood of $Z_{t_0^-}$ so that, 
in a neighborhood $B$ of $(t_0,Z_{t_0^-})$, the zeros of $u$ coincide with the graph of $S$. 
Moreover $S$ satisfies 
\beq
\label{eq: zero locally}
		 S(x) - t_0 =-\frac{1}{2}(x - Z_{t_0-})^2 + \mathcal O (|x - Z_{t_0-}|^3)
\eeq
as $x \to Z_{t_0-}$.
As we assumed that   $Z_s=L_s$ for all $t_0-\gd \leq s<t_0$ and $Z_s=R_s$ for all $t_0 \leq s \leq t_0+\gd$, it holds that, for all $(t,x)\in B$, $u(t,x)>0$ if $t >  S(x)$ and has opposite sign if $t <  S(x)$. Using \eqref{eq: zero locally}, we deduce that for $\gep>0$ small enough there exists $\gd>0$ so that for all $t_0-\gd \leq t \leq t_0$,	
	$u(t,Z_{t_0-}-\gep) > 0$ and $u(t,Z_{t_0-}) < 0$.
Moreover for $\gd>0$ small enough
	$$
		\{ (t,Z_t) : t_0 - \gd \le t < t_0 \}
		\; = \; 
		\{ (S (x),x) : Z_{t_0 - \gd} \le x < Z_{t_0 -}\},
	$$
so that from the continuity of $Z$ and \eqref{eq: zero locally} we obtain that for $\delta>0$ small enough $Z_{t_0-\gd}\in ]Z_{t_0-}-\gep,Z_{t_0-}[$.
\end{proof}

%%%%%%%%%%%%%%%%%%%%%%%%%%%%%%%%%%%%%%%%%%%%%%%%%%
%%%%%%%%%%%%%%%%%%%%%%%%%%%%%%%%%%%%%%%%%%%%%%%%%%
%%%%%%%%%%%%%%%%%%%%%%%%%%%%%%%%%%%%%%%%%%%%%%%%%%
%%%%%%%%%%%%%%%%%%%%%%%%%%%%%%%%%%%%%%%%%%%%%%%%%%

\appendix
\section{}\label{appendix}

In this appendix, we provide the needed details to understand the implications of two earlier works, \cite{huveneers} and \cite{jara_menezes}, 
{for the understanding of the process $X$ evolving in a rough potential, as described in the introduction, see \eqref{eq: edwards wilkinson} and \eqref{eq: X in rough field}.}
We can try to construct a process $X$ solving \eqref{eq: X in rough field} in three steps: 
First, we replace the velocity field $u$ by a regularized field $u^\ell$, varying smoothly in space on some length scale $\ell > 0$;
second, we define the associated process $X^\ell$;
and third, we obtain $X$ as the limit of the processes $X^\ell$ when the regularization is removed, i.e.\@ for $\ell \to 0$. 
Concretely, for $\ell > 0$, let 
\begin{equation}\label{eq: regularization appendix}
	u_\ell (t,\cdot) \; = \; P_{\ell^2} \star u (t,\cdot) ,  
\end{equation}
where the heat kernel $P$ is defined in \eqref{eq: gaussian kernel} and where $u = - \partial_x V$ with $V$ solving \eqref{eq: edwards wilkinson}. 
Let then $X^\ell$ be the solution of the Cauchy problem \eqref{eq: X in rough field} with $u^\ell$ instead of $u$, i.e.\@ $X^\ell_0 = 0$ and
\begin{equation}\label{eq: X l appendix}
	\partial_t X^\ell_t \; = \; u_\ell (t,X_t^\ell). 
\end{equation}

Let us first consider the analysis performed in \cite{huveneers}: 
{We recall the main results found there, and we explain the connection with the above problem.}
{Let $\lambda > 0$. }
In \cite{huveneers}, the process $S^\lambda$ satisfying $S^\lambda_0 = 0$ and solving 
\begin{equation}\label{eq: evolution of W appendix}
	\partial_t S_t^\lambda  \; = \; \lambda u_1(t,S_t^\lambda) , \qquad t \ge 0,
\end{equation}
is studied numerically for various values of $\lambda >0 $. %(the notation $\lambda = \ell^{1/2}$ is used in \cite{huveneers}). 
The upshot is that, in the limit $\lambda \to 0$, and as far as numerical simulations can be reliably performed,  
\begin{equation}\label{eq: quantitative estimates appendix}
	\bE((S_t^\lambda)^2) \; \sim \; \lambda^2 t^{3/2} \quad \text{for} \quad 0 \le t \le \lambda^{-4}
	\qquad \text{and} \qquad 
	\bE((S_t^\lambda)^2) \; \sim \;  t \quad \text{for} \quad t \ge \lambda^{-4},
\end{equation}
up to possible logarithmic corrections for $t \ge \lambda^{-4}$.  

{For $\ell > 0$, we can now define a process $\tilde X^\ell$ that will have the same law as $X^\ell$ solving \eqref{eq: X l appendix}: 
For all $t\ge 0$, }
\begin{equation}\label{eq: X tilde appendix}
	\tilde X_t^\ell \; = \; \ell S^{\ell^{1/2}}_{t/\ell^2} .
\end{equation}
Indeed, since $S^\ell$ solves \eqref{eq: evolution of W appendix}, the process $\tilde X^\ell$ solves
\begin{equation}\label{eq: X tilde appendix}
	\partial_t \tilde X_t^\ell  \; = \; \ell^{-1/2} u_1 (\ell^{-2}t, \ell^{-1}\tilde X_t^\ell).
\end{equation}
With the regularization \eqref{eq: regularization appendix}, the scaling relation 
\begin{equation}\label{eq: scaling relation appendix}
	\big(u_1 (t,x)\big)_{t\ge 0, x \in \R} \; = \; \big(\ell^{1/2} u_\ell (\ell^2 t , \ell x) \big)_{t\ge 0, x \in \R}
\end{equation}
holds in law for all $\ell > 0$, as can be checked by computing the covariance of both fields. 
Therefore, we conclude from \eqref{eq: X tilde appendix} that $\tilde X^\ell = X^\ell$ in law. 
{At this point, using \eqref{eq: X tilde appendix} and the equality $\tilde X^\ell = X^\ell$ in law, 
we may reformulate \eqref{eq: quantitative estimates appendix} as: }
$$
	\bE((X_t^\ell)^2) \; \sim \; t^{3/2} \quad \text{for} \quad 0 \le t \le 1
	\qquad \text{and} \qquad 
	\bE((X_t^\ell)^2) \; \sim \;  t \quad \text{for} \quad t \ge 1. 
$$
Since these estimates do not depend on $\ell$, they make the case for the existence of a limit process $X$ solving \eqref{eq: X in rough field}.
Let us next move to the result in \cite{jara_menezes} quoted in the introduction. 
{We have already described in the main text the convergence of the processes $(W^n)_{n\ge 1}$ studied in \cite{jara_menezes}.}
Here, to make here our point, let us define a sequence of processes $U^n = (U^n_t)_{0 \le t \le T}$ 
that can reasonably be expected to behave as the processes $W^n$, 
and for which the connection with \eqref{eq: X in rough field} can be made very easily through a scaling argument. 
For $n\in \N^*$, let $U^n$ be a real valued process satisfying $U^n_0 = 0$ and solving 
\begin{equation}\label{eq: Y in rough field}
	\partial_t U_t^n  \; = \;   n u_1 (n^2t,U_t^n) \quad \text{for} \quad 0 \le t \le T.  
\end{equation}
For large values of $t$, and in the large $n$ limit, we may expect that $U^n$ and $W^n$ behave in a similar way. 
In particular, we expect the scaling {$\bE (U^n(t)^2) \sim n t^{3/2}$} to hold in this regime.

{Again, for $\ell > 0$, let us define a process $\hat X^\ell = (\hat X_t^\ell)_{0 \le t \le \ell T}$ that will turn out to have the same law as $X^\ell$
for $0 \le t \le \ell T$: 
$$
	\hat X^\ell_t \; = \; \ell U^{\ell^{-1/2}}_{t/\ell},
$$
where we have assumed that $\ell$ is such that $\ell^{-1/2}$ is an integer. 
Indeed, from \eqref{eq: Y in rough field}, we deduce that $\hat X^\ell_t$ solves 
$$
	\partial_t \hat X_t^\ell  \; = \;   \ell^{-1/2} u_{1} (\ell^{-2} t,\ell^{-1} \tilde X_t^\ell) \quad \text{for} \quad 0 \le t \le \ell T  
$$
and, by the scaling relation \eqref{eq: scaling relation appendix}, we deduce that $X^\ell = \hat X^\ell$ in law, for $t\in [0,\ell T]$. 
We observe also that $\bE ((\tilde X^\ell(t))^2) \sim t^{3/2}$ on this time interval. 
}
This brings thus some support to the validity of \eqref{eq: sub diffusive delta t}, 
{but the time interval $[0,\ell T]$ shrinks to 0 as $\ell \to \infty$,} and $\hat X^\ell$ should thus be controlled on longer time scales to reach a firm conclusion.

% BIBLIOGRAPHY

\end{document}